\newcommand{\R}{\mathbb{R}}
\newcommand{\Z}{\mathbb{Z}}
\newcommand{\Q}{\mathbb{Q}}
\newcommand{\N}{\mathbb{N}}
\newcommand{\C}{\mathbb{C}}
\newcommand{\transpose}{^{\operatorname{tr}}}
\renewcommand{\sl}{\operatorname{SL}(2,\Z)}
\newcommand{\rank}{\operatorname{rank}}
\newtheorem{vorlage}{}[section]
\newtheorem{prop}[vorlage]{Proposition}
\newtheorem{lem}[vorlage]{Lemma}
\newtheorem{thm}[vorlage]{Theorem}
\newtheorem{defin}[vorlage]{Definition}
\newtheorem{rem}[vorlage]{Remark}
\newtheorem{exa}[vorlage]{Example}
\theoremstyle{nonumberbreak}
\newtheorem{proof}{Proof}
\title{Jacobi forms of theta type}
\author{Martin Woitalla}
\begin{document}
\maketitle
\begin{abstract}
Let $L$ be a positive definite even lattice. We introduce theta type Jacobi forms and construct three towers of Jacobi forms with a particular easy pullback-structure. We use theta type Jacobi forms to explain the existence of a cusp form of weight 24 with respect to the irreducible root lattice $D_{4}$ which is based on additional symmetries of the Coxeter diagram. 
\end{abstract}
\section{Introduction}
\noindent We give a short presentation of the contents. Let $V$ be a real quadratic space of signature $(2,n)$ where $n\in\N$. We will always assume that $n\geq 3$. The bilinear form of $V$ is denoted by $(\cdot,\cdot)$. The group of all isometries of $V$ is called the \textit{orthogonal group of} $V$ and is given by
\[\operatorname{O}(V)=\{g\in\operatorname{GL}(V)\,|\,\forall v\in V \,:\, (gv,gv)=(v,v)\}\,.\]
By $\operatorname{SO}(V)$ we denote the subgroup of index two which equals the kernel of the determinant character. We obtain another subgroup $\operatorname{O}(V)^{+}$ of index two  as the kernel of the real spinor norm. The intersection of the groups $\operatorname{SO}(V)$ and $\operatorname{O}(V)^{+}$ is denoted by $\operatorname{SO}(V)^{+}$. This is the connected component of the identity and is well-known to be a  semisimple and noncompact Lie group, compare for ex. \cite{Kp}. Its maximal compact subgroup is given by $K=\operatorname{SO}(2)\times\operatorname{SO}(n)$.  One can construct a projective model for the Hermitian symmetric space $\operatorname{SO}(V)^{+}/K$ namely 
\[\mathcal{D}=\{[\mathcal{Z}]\in\mathbb{P}(V\otimes\C)\,|\,(\mathcal{Z},\mathcal{Z})=0\,,\,(\mathcal{Z},\overline{\mathcal{Z}})>0\}^{+}\xrightarrow[]{\sim}\operatorname{SO}(V)^{+}/K\,.\] 
Here the superscript $+$ means that we have chosen one of the two connected components.
The associated affine cone is defined as
\[\mathcal{D}^{\bullet}=\{\mathcal{Z}\in V\otimes \C\,|\,[\mathcal{Z}]\in \mathcal{D}\}\,.\]
\begin{defin}\label{def:lattice}
 A subset $L\subseteq V$ is called a \textit{lattice}\index{lattice} in $V$ if there exist linearly independent vectors $b_{1},\dots, b_{N}$ in $V$ such that $L=\Z b_{1}+\dots+\Z b_{N}$. The quantity $N$ is called the rank of $L$ and is denoted by $\rank(L)$. We call $L$ \textit{even} if for all $l\in L$ the condition $(l,l)\in 2\Z$ is satisfied. A lattice is called \textit{a full lattice in} $V$ if $\rank L=\dim V$. A lattice is called positive definite if for all $0\neq l\in L$ we have $(l,l)>0$. For any $t\in\Z$ we define the rescaled lattice $L(t)$ to be the set $L$ equipped with the bilinear form $t(\cdot,\cdot)$.
\end{defin}
  \noindent Let $L_{2}$ be a full lattice in $V$ such that $L_{2}$ is even and splits two hyperbolic planes. In view of the Gramian of $L_{2}$ this means that there exists a basis such that 
\[\operatorname{Gram}(L_{2})=\begin{pmatrix}
					  0&0&0&0&1\\
					  0&0&0&1&0\\   
					  0&0&-S&0&0\\
					  0&1&0&0&0\\
					  1&0&0&0&0
                                         \end{pmatrix}\in\operatorname{GL}(L_{2})\subseteq \operatorname{GL}(V)\,.\]
where $S$ denotes the Gramian of a positive definite even lattice $L$. 
 We consider the arithmetic subgroup 
 \[\operatorname{O}(L_{2})^{+}=\{g\in\operatorname{O}(V)^{+}\,|\,g\,L_{2}\subseteq L_{2}\}\,.\] 
 For any subgroup $\Gamma\leq \operatorname{O}(L_{2})^{+}$ of finite index we consider the modular variety $\Gamma\backslash\mathcal{D}$. This is a noncompact space. We can add cusps to $\Gamma\backslash\mathcal{D}$ such that
\[\Gamma\backslash \mathcal{D}^{\ast}=\Gamma\backslash\mathcal{D}\amalg \coprod_{\Pi}{X_{\Pi}}\amalg \coprod_{l}{Q_{l}}\]
is compact, compare e.g. \cite{BorJ}. Here $l$ runs through the finitely many $\Gamma$-orbits 
of isotropic lines and $\Pi$ runs through the finitely many $\Gamma$-orbits of isotropic planes in $L_{2}\otimes\Q$. Each $X_{\Pi}$ corresponds to a modular curve and is called a one-dimensional cusp of $\Gamma\backslash\mathcal{D}$ and each $Q_{l}$ corresponds to a point and is called a zero-dimensional cusp.  The one-dimensional cusps are represented by rational two-dimensional totally isotropic subspaces of $V$ and the zero-dimensional cusps are represented by primitive isotropic vectors $c\in L_{2}$. 
\begin{defin}\label{def:modular form}
 Let $\Gamma$ be a subgroup of $\operatorname{O}(L_{2})^{+}$. A \textit{modular form} of weight $k\in \Z$ and character $\chi\,:\,\Gamma\to\C^{\times}$ with respect to $\Gamma$ is a holomorphic function  $F\,:\,\mathcal{D}^{\bullet}\to\C$ such that 
\[\begin{aligned}
   &F(t\mathcal{Z})=t^{-k}F(\mathcal{Z})\quad\text{ for all }t\in\C^{\times}\,,&\\
   &F(g\mathcal{Z})=\chi(g)F(\mathcal{Z})\quad\text{ for all }g\in\Gamma\,.&
  \end{aligned}
\]
A modular form is called a \textit{cusp form} if it vanishes at every cusp. The space of modular forms of weight $k$ and character $\chi$ for the group $\Gamma$ will be denoted by $\mathcal{M}_{k}(\Gamma,\chi)$. For the subspace of cusp forms we will write $\mathcal{S}_{k}(\Gamma,\chi)$.  
\end{defin} 
\noindent Orthogonal type modular forms are very important tools to investigate the modular variety  $\Gamma\backslash\mathcal{D}$. In this text we will focus our interest on the Jacobi group $\Gamma^{J}(L)$ which is a distinguished parabolic type subgroup of $\operatorname{O}(L_{2})^{+}$, see section \ref{sec:Jacobi forms} for the definition. Modular forms with respect to $\Gamma^{J}(L)$ are called \textit{Jacobi forms} and can be considered as a natural generalization of the classical Jacobi forms appearing in \cite{EZ}.
\begin{defin}\label{discriminant group and dual lattice}
 Let $L$ be an even lattice in $V$. The dual lattice is the $\Z$-module
 \[L^{\vee}\index{$L^{\vee}$}:=\{x\in V\,|\,\forall l\in L:\,(x,l)\in\Z\}\,.\]
 Since $L\subseteq L^{\vee}$ in this case we can define the discriminant group as the finite abelian group 
 \[D(L):=L^{\vee}/L\,.\]
The lattice $L$ is called maximal if for any even lattice $M$ satisfying both $\operatorname{rank}(L)=\operatorname{rank}(M)$ and $L\subseteq M$ we already have $L=M$.
\end{defin}
\noindent The group $\operatorname{O}(L_{2})^{+}$ acts on the discriminant group $D(L_{2})$. The kernel of this action is denoted by $\widetilde{\operatorname{O}}(L_{2})^{+}$. The following construction is due to Gritsenko, see \cite{G} for the details.
\begin{thm}[Gritsenko'93]
Let $\varphi$ be a Jacobi form with trivial character and weight $k$ where $k\geq 4$.  There exists a linear operator $\operatorname{A-Lift}$ defined on the space of Jacobi forms of weight $k$ into the space of modular forms of weight $k$ such that 
 \[\operatorname{A-Lift}(\varphi)\in \mathcal{M}_{k}(\widetilde{\operatorname{O}}(L_{2})^{+},1)\,.\]
 For maximal even lattices this lift maps cusp forms to cusp forms.
\end{thm}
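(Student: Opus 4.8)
The statement is Gritsenko's arithmetic (additive) lift, and the plan is to construct $\operatorname{A-Lift}(\varphi)$ explicitly as a Fourier series on the affine cone $\mathcal{D}^{\bullet}$ and then to verify the three conditions of Definition~\ref{def:modular form} one at a time. Using the Witt decomposition $L_{2}\cong U\oplus U\oplus L(-1)$ displayed by the Gram matrix above ($U$ the hyperbolic plane), a point of $\mathcal{D}^{\bullet}$ is written in tube-domain coordinates $(\tau,\mathfrak{z},\omega)$ with $\tau,\omega\in\HP$ and $\mathfrak{z}\in L\otimes\C$, and $\JG{L}$ is the parabolic stabilizer of the rank-one isotropic subspace spanned by the first standard isotropic vector, acting in these coordinates in the usual Jacobi-group manner (section~\ref{sec:Jacobi forms}). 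A Jacobi form $\varphi$ of weight $k$ then has a Fourier expansion
\[
\varphi(\tau,\mathfrak{z})=\sum_{\substack{n\in\Z,\ \ell\in L^{\vee}\\ 2n-(\ell,\ell)\ge 0}} f(n,\ell)\,e^{2\pi i\left(n\tau+(\ell,\mathfrak{z})\right)},
\]
and, by the elliptic transformation law, $f(n,\ell)$ depends on $(n,\ell)$ only through the hyperbolic norm $2n-(\ell,\ell)$ and the class $\ell+L\in D(L)$; if $\varphi$ is a cusp form then $f(n,\ell)=0$ whenever $2n-(\ell,\ell)=0$. I then define
\[
\operatorname{A-Lift}(\varphi)(\tau,\mathfrak{z},\omega)=\sum_{\substack{n,m\ge 0,\ \ell\in L^{\vee}\\ 2nm-(\ell,\ell)\ge 0}}\ \Bigl(\sum_{d\mid(n,\ell,m)}d^{k-1}\,f\bigl(\tfrac{nm}{d^{2}},\tfrac{\ell}{d}\bigr)\Bigr)\,e^{2\pi i\left(n\tau+(\ell,\mathfrak{z})+m\omega\right)},
\]
equivalently $\operatorname{A-Lift}(\varphi)=\sum_{m\ge 0}\bigl(\varphi\mid T_{-}(m)\bigr)\,e^{2\pi i m\omega}$, where $T_{-}(m)$ is Gritsenko's Hecke operator carrying a Jacobi form of index $1$ to one of index $m$. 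Linearity in $\varphi$ is immediate, and the homogeneity $F(t\mathcal{Z})=t^{-k}F(\mathcal{Z})$ is built in by reading this series off in homogeneous coordinates on the cone.

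Convergence is the first real point: for $k\ge 4$ the Fourier coefficients of $\varphi$ obey a polynomial bound $|f(n,\ell)|=O\bigl((2n-(\ell,\ell))^{k-1}\bigr)$ (Hecke's estimate on the Eisenstein part, with $O(D^{(k-1)/2+\varepsilon})$ on the cuspidal part), so $|c(n,\ell,m)|=O\bigl((2nm-(\ell,\ell))^{k-1+\varepsilon}\bigr)$ and the series converges absolutely and locally uniformly on $\mathcal{D}^{\bullet}$, hence is holomorphic there.

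The central task is modularity under $\widetilde{\operatorname{O}}(L_{2})^{+}$; I would use the structural fact (Eichler; see \cite{G}) that this group is generated by the integral Jacobi group $\JG{L}$ together with one extra involution $V$, namely the one interchanging the two hyperbolic planes, which acts on coordinates by $(\tau,\mathfrak{z},\omega)\mapsto(\omega,\mathfrak{z},\tau)$. Invariance under $\JG{L}$ is essentially formal: the Heisenberg translations merely permute the summation index and fix each coefficient, while the embedded $\slR$-part acts on the Fourier--Jacobi coefficients $\varphi\mid T_{-}(m)$ through operators compatible with the Jacobi transformation law inherited from $\varphi$. The substantial point is invariance under $V$: since $V$ interchanges $n$ and $m$, it amounts to the coefficient symmetry $c(n,\ell,m)=c(m,\ell,n)$, and unfolding the inner $d$-sum shows this reduces \emph{exactly} to the one fact that $f(n,\ell)$ depends only on $2n-(\ell,\ell)$ and on $\ell+L$. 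I would carry this out by the classical rearrangement-of-sums argument --- the lattice-theoretic form of Maass' Spezialschar identity. Since $V$ and the Jacobi group act trivially on $D(L_{2})$, the resulting form has trivial character, so $\operatorname{A-Lift}(\varphi)\in\mathcal{M}_{k}(\widetilde{\operatorname{O}}(L_{2})^{+},1)$.

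Finally, for $\varphi\in\mathcal{S}_{k}(\JG{L},1)$ with $L$ (hence $L_{2}$) maximal: at the zero-dimensional cusp attached to the standard primitive isotropic vector, vanishing of $\operatorname{A-Lift}(\varphi)$ is the vanishing of its zeroth Fourier--Jacobi coefficient $\varphi\mid T_{-}(0)$, which by the formula is a constant multiple of $f(0,0)\,E_{k}(\tau)$ and hence is $0$ because $\varphi$ is cuspidal; for a maximal lattice the zero-dimensional cusps form a single $\widetilde{\operatorname{O}}(L_{2})^{+}$-orbit, so the modularity just established propagates this vanishing to all of them, and the one-dimensional cusps are handled by applying the corresponding Fourier--Jacobi (Siegel) restriction operator at each boundary component and invoking cuspidality of $\varphi$ once more. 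The hard step in all of this is the $V$-invariance in the previous paragraph --- the only place where the \emph{full} Jacobi transformation law, rather than mere periodicity, is used; a slicker but less elementary route is to identify $\operatorname{A-Lift}$ with a case of Borcherds' additive theta lift from vector-valued modular forms for the Weil representation of $D(L_{2})$, where $\operatorname{O}(L_{2})^{+}$-invariance is automatic from the theta kernel and the work instead shifts to matching Fourier expansions and to unfolding the theta integral for the cusp-form assertion.
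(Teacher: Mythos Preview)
The paper does not prove this theorem: it is stated in the introduction with the attribution ``due to Gritsenko, see \cite{G} for the details'' and no argument is supplied. There is therefore no ``paper's own proof'' against which to compare your attempt.

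That said, your sketch is a faithful outline of Gritsenko's original construction in \cite{G}: the explicit Fourier-series definition $\operatorname{A-Lift}(\varphi)=\sum_{m\ge 0}(\varphi\mid T_{-}(m))e^{2\pi i m\omega}$, modularity under $\widetilde{\operatorname{O}}(L_{2})^{+}$ obtained by combining Jacobi-group invariance with invariance under the $(\tau,\omega)$-swap (the key point being the Maass-type symmetry $c(n,\ell,m)=c(m,\ell,n)$, which follows from the dependence of $f(n,\ell)$ only on hyperbolic norm and discriminant class), and the cusp-form assertion via transitivity of $\widetilde{\operatorname{O}}(L_{2})^{+}$ on zero-dimensional cusps for maximal $L_{2}$. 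Two places would need tightening in a full write-up: the generation of $\widetilde{\operatorname{O}}(L_{2})^{+}$ by $\Gamma^{J}(L)$ and the single involution $V$ requires the Eichler transvection argument (not merely a citation), and the treatment of the $m=0$ term and of the one-dimensional boundary components in the cusp-form step deserves more than a sentence. The theta-lift alternative you mention at the end is a genuine and cleaner route to modularity, at the cost of hiding the Hecke-operator structure that the present paper actually uses downstream (in the Fourier--Jacobi criterion for automorphic products).
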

\noindent In section 2 we will give a review of Jacobi forms in many variables. In section 3 we use Jacobi's theta-series and the weak Jacobi form in two variables $\phi_{0,1}$ of weight $0$ and index $1$ to present pullback constructions for several series of lattices. The forms obtained in this way are called theta type Jacobi forms. In the last section we give a new explanation for the existence of the cusp form of weight 24 for the lattice $D_{4}$ which is based on its representation as an automorphic product defined by theta type Jacobi forms.   

\section{Jacobi forms}\label{sec:Jacobi forms}
\noindent The presentation of the theory is influenced by the classical book on Jacobi forms in one variable of Eichler and Zagier \cite{EZ}. For the description of Jacobi forms in several variables we follow \cite{G} and  \cite{CG}.
Let $V$ be a real quadratic space of signature $(2,2+N)$ where $N\in\N$. Let $L$ be a positive definite even lattice of rank $N$ with bilinear form $(\cdot,\cdot)_{L}=(\cdot,\cdot)$ such that $L(-1)\subseteq V$. By construction $L_{2}\subseteq V$ contains two perpendicular hyperbolic planes $U=\langle e,f \rangle$ and $U_{1}=\langle e_{1},f_{1} \rangle$. Let $F$ be the totally isotropic plane spanned by $e$ and $e_{1}$. We define the parabolic subgroup of $G=\operatorname{SO}(L_{2})^{+}$ fixing $F$ as
\[N_{G}(F)=\{g\in \operatorname{SO}(L_{2})^{+}\,|\,gF=F\}\,.\]
   The matrix realization of $N_{G}(F)$ with respect to our standard basis of $L_{2}$ is well-known.
The \textit{integral Heisenberg group}\index{Heisenberg group} $H(L)$\index{$H(L)$} of the lattice $L$ is generated by the elements 
\[[x,y:r]:=\begin{pmatrix}
            1&0&-y\transpose S&-\frac{1}{2}(x,y)-r&-\frac{1}{2}(y,y)\\
	    0&1&-x\transpose S&-\frac{1}{2}(x,x)&-\frac{1}{2}(x,y)+r\\
	    0&0&I_{N}&x&y\\
	    0&0&0&1&0\\
	    0&0&0&0&1
           \end{pmatrix}\]
where $x,y\in L$ and $r\in\Z/2$ such that $r+\frac{1}{2}(x,y)\in\Z$. The group law of $H(L)$ is 
\[[x,y:r]\cdot
[x',y':r']=[x+x',y+y',r+r'+\frac{1}{2}[(x,y')-(x',y)]]\,.\]
We define an embedding of $\operatorname{SL}(2,\Z)$ into $\operatorname{SO}(L_{2})^{+}$ as
\[\{A\}:=\begin{aligned}
   &\begin{pmatrix}
   A^{\ast}&0&0\\0&I_{N}&0\\0&0&A
  \end{pmatrix}\text{ for }A\in\operatorname{SL}(2,\Z)\,,\,A^{\ast}=\begin{pmatrix}0&1\\1&0\end{pmatrix}(A^{-1})^{\operatorname{tr}}\begin{pmatrix}0&1\\1&0\end{pmatrix}&
  \end{aligned}
\]
The group $\operatorname{SL}(2,\Z)$ acts on the Heisenberg group by conjugation
\[A.[x,y:r]=\{A\}[x,y:r]\{A\}^{-1}=[dx-cy,ay-bx:r]\,.\]
Note that $\frac{1}{2}(dx-cy,ay-bx)=\frac{1}{2}(x,y)\bmod \Z$ since $A\in\sl$.
The \textit{integral Jacobi group}\index{Jacobi group} $\Gamma^{J}(L)$\index{$\Gamma^{J}(L)$} is defined as the subgroup of $N_{G}(F)$ which acts trivially on the sublattice $L$. 
 This group is isomorphic to the semi-direct product 
\[\operatorname{SL}(2,\Z)\ltimes H(L)\,.\] 
 Let $\chi$ be a finite character of the Jacobi group $\Gamma^{J}(L)$. The structure of the Jacobi group implies
\[\chi=v_{\operatorname{SL}(2,\Z)}\cdot\nu_{H(L)}\]
where $v_{\operatorname{SL}(2,\Z)}$ is a finite character of $\operatorname{SL}(2,\Z)$ and $\nu_{H(L)}$ is a finite character for the group $H(L)$. The character $\nu_{H(L)}$ satisfies the equation 
\[\forall A\in\operatorname{SL}(2,\Z)\,\forall [x,y:r]\in H(L)\,:\, \nu_{H(L)}(A.[x,y:r])=\nu_{H(L)}([x,y:r])\,.\]
We recall the definition of \textit{Dedekind`s eta function}\index{Dedekind`s eta function} $\eta\,:\,\mathbb{H}\to\C$\index{$\eta$} where $\mathbb{H}$ denotes the upper half plane in $\C$. This function is defined by the product expansion 
\[\eta(\tau)=q^{1/24}\prod_{n=1}^{\infty}{(1-q^{n})}\quad,\quad q=e^{2\pi i\tau}\,.\] 
The transformation law for the generators of $\operatorname{SL}(2,\Z)$ is 
\[\begin{aligned}
   &\eta(\tau+1)=e^{\pi i/12}\eta(\tau)&\\
   &\eta(-1/\tau)=(-i\tau)^{1/2}\eta(\tau)&
  \end{aligned}\]
where we have chosen the branch of the square root $z^{1/2}$ which is positive if $z>0$.
There exists a multiplier system $v_{\eta}\,:\,\mathbb{H}\to\C^{\ast}$ of order 24 whose square is a finite character satisfying 
\[\eta\left(\frac{a\tau +b}{c\tau+d}\right)=v_{\eta}(A)\,\eta(\tau)\quad\text{for}\quad A=\begin{pmatrix}
a&b\\c&d                                                                                                                                                                                                                                                                                                                                                                                                            \end{pmatrix}
\in\operatorname{SL}(2,\Z)\,.\]
For more details on the function $\eta(\tau)$ we refer to \cite{Ap}, section 3.    
We define the quantities $s(L),n(L)\in\N$ as the generators of the ideal in $\Z$ generated by $(x,y)$ or $(x,x)$, respectively, where $x,y\in L$. These quantities are called \textit{scale}\index{lattice!scale of} and \textit{norm}\index{lattice!norm of} of the lattice $L$. Obviously $s(L)|n(L)\,.$ For a proof of the next Proposition see  \cite{CG} and \cite{R}.
\begin{prop}\label{prop:characters of Jaobi group}
 \begin{enumerate}[(a)]
  \item  The group of finite characters of $\operatorname{SL}(2,\Z)$ is a cyclic group of order 12 and is generated by $v_{\eta}^{2}$.
 \item Let $\nu_{H(L)}\,:\,H(L)\to\C^{\ast}$ be a character of finite order such that 
\[\nu_{H(L)}(A.[x,y:r])=\nu_{H(L)}([x,y:r])\]
 for all $[x,y:r]\in H(L)$ and $A\in\operatorname{SL}(2,\Z)\,.$ Then $\nu_{H(L)}$ is of the shape
\[\nu_{H(L)}([x,y:r])=e^{\pi i t((x,x)+(y,y)-(x,y)+2r)}\]
where $t\in\Q$ such that $t\cdot s(L)\in\Z$. 
 \end{enumerate}
\end{prop}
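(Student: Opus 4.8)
The plan is to treat the two parts separately; after pinning down the relevant abelianizations, each comes down to an elementary but bookkeeping-heavy computation.

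\textbf{Part (a).} A finite character $\chi\colon\sl\to\C^{\times}$ has image in the group of roots of unity, hence is trivial on the commutator subgroup and factors through the abelianization $\sl^{\mathrm{ab}}$. Abelianizing the standard presentation $\sl=\langle S,T\mid S^{4}=1,\ S^{2}=(ST)^{3}\rangle$ (from which $(ST)^{6}=S^{4}=1$) yields $\sl^{\mathrm{ab}}\cong\Z/12\Z$, generated by the class of $T$, with the class of $S$ equal to $-3$ times it. Hence the character group is cyclic of order $12$, and a character generates it exactly when it sends $T$ to a primitive $12$-th root of unity. The excerpt records that $v_{\eta}^{2}$ is a finite character, and from $\eta(\tau+1)=e^{\pi i/12}\eta(\tau)$ one reads off $v_{\eta}^{2}(T)=e^{\pi i/6}$, a primitive $12$-th root of unity; therefore $v_{\eta}^{2}$ generates the group (and consistently $v_{\eta}^{2}(S)=-i=(e^{\pi i/6})^{-3}$).

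\textbf{Part (b).} Let $\nu=\nu_{H(L)}$ be a finite character of $H(L)$ satisfying the stated $\sl$-invariance. The first step is to record the internal structure of $H(L)$: the group law gives that the commutator of $[x,y:r]$ and $[x',y':r']$ equals $[0,0:(x,y')-(x',y)]$, and since the integers $(x,y')-(x',y)$ generate the subgroup $s(L)\Z\subseteq\Z$, the commutator subgroup is $\{[0,0:m]\mid m\in s(L)\Z\}$. As $\nu$ kills commutators, $\zeta:=\nu([0,0:1])$ is a root of unity with $\zeta^{s(L)}=1$. Moreover $x\mapsto[x,0:0]$, $y\mapsto[0,y:0]$ and $m\mapsto[0,0:m]$ are homomorphisms, and every element factors as
\[[x,y:r]=[x,0:0]\,[0,y:0]\,[0,0:r-\tfrac12(x,y)]\,,\]
the last entry lying in $\Z$ by the constraint $r+\tfrac12(x,y)\in\Z$. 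Thus $\nu$ is completely determined by $\zeta$ together with the two finite characters $\psi_{1}(x)=\nu([x,0:0])$ and $\psi_{2}(y)=\nu([0,y:0])$ of the lattice $L$, namely $\nu([x,y:r])=\psi_{1}(x)\,\psi_{2}(y)\,\zeta^{\,r-\frac12(x,y)}$.

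It then suffices to impose the invariance on the two generators $S,T$ of $\sl$. From $A.[x,y:r]=[dx-cy,\,ay-bx:r]$ one has $S.[x,y:r]=[-y,x:r]$ and $T.[x,y:r]=[x,y-x:r]$. Invariance under $S$ taken at $y=0$ forces $\psi_{1}=\psi_{2}=:\psi$, and invariance under $T$ then reads $\psi(y-x)\,\zeta^{\frac12(x,x)}=\psi(y)$ for all $x,y\in L$; since $\psi$ is a homomorphism and $(x,x)\in2\Z$ by evenness of $L$, this forces $\psi(x)=\zeta^{(x,x)/2}$. Substituting back gives
\[\nu([x,y:r])=\zeta^{\frac12((x,x)+(y,y)-(x,y)+2r)}=e^{\pi i t\,((x,x)+(y,y)-(x,y)+2r)}\]
where $\zeta=e^{2\pi i t}$, so that $t\in\Q$ and $t\,s(L)\in\Z$. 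To close the argument I would check the converse: for any such $t$ the relation $\zeta^{s(L)}=1$ makes $x\mapsto\zeta^{(x,x)/2}$ a genuine homomorphism of $L$, and a direct verification (again using $\zeta^{s(L)}=1$, together with the fact that each of $(x,x),(y,y),(x,y)$ lies in $s(L)\Z$) shows that the displayed formula respects the group law of $H(L)$ and is $\sl$-invariant, so it indeed defines a character of the asserted form.

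\textbf{Where the difficulty lies.} Part (a) is routine once one grants, as the excerpt does, that $v_{\eta}^{2}$ is a genuine character and not merely a multiplier system. The substantive work is in part (b): computing the commutator subgroup of $H(L)$ correctly as $s(L)\Z$ inside the centre, choosing coordinates on which $\nu$ can be evaluated cleanly, and carrying out the well-definedness check so that no hidden relation is missed. The half-integral third coordinate of $[x,y:r]$ means that the bookkeeping in the factorization and in the $T$-invariance identity must respect the constraint $r+\tfrac12(x,y)\in\Z$; this is essentially the only place where one can slip.
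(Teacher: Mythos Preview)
Your argument is correct in both parts. The paper itself does not supply a proof of this Proposition; it simply refers the reader to \cite{CG} and \cite{R}. So there is nothing to compare against beyond noting that your self-contained treatment---computing $\sl^{\mathrm{ab}}$ from the $S,T$ presentation in (a), and in (b) identifying the commutator subgroup of $H(L)$ as $\{[0,0:m]\mid m\in s(L)\Z\}$, decomposing via $[x,y:r]=[x,0:0][0,y:0][0,0:r-\tfrac12(x,y)]$, and then pinning down $\psi$ from the $S$- and $T$-invariance---is exactly the standard route one would expect those references to take. One small remark: the $S$-invariance actually yields a bit more than $\psi_{1}=\psi_{2}$, since $S.[0,y:0]=[-y,0:0]$ forces $\psi(y)^{2}=1$; this is automatically consistent with $\psi(x)=\zeta^{(x,x)/2}$ because $(x,x)\in s(L)\Z$ and $\zeta^{s(L)}=1$, so no extra condition arises.
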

\noindent  We extend $(\cdot,\cdot)$  to $L \otimes \C$ by $\C$-linearity. Let $k\in\Z$ and $t\in\Q$. We define an action of the Jacobi group on the space of holomorphic functions on $\mathbb{H}\times (L\otimes \C)$ by means of the generators of $\Gamma^{J}(L)$:
\begin{equation}\label{definition action of Jacobi group}
 \begin{aligned}
 &(\left.\varphi\right|_{k,t}A)(\tau,\mathfrak{z})&:=&&&(c\tau +d)^{-k}e^{-\pi i t\frac{c(\mathfrak{z},\mathfrak{z})}{c\tau+d}}\cdot\varphi\left(\frac{a\tau+b}{c\tau+d},\frac{\mathfrak{z}}{c\tau +d}\right) &\\
&(\left.\varphi\right|_{k,t}[x,y:r])(\tau,\mathfrak{z})&:=&&&e^{2\pi it(\frac{1}{2}(x,x)\tau+(x,\mathfrak{z})+\frac{1}{2}(x,y)+r)}\cdot \varphi(\tau,\mathfrak{z}+x\tau+y)&
\end{aligned}
\end{equation}
where $A=\begin{pmatrix}                                                                                                                                                                                                       a&b\\c&d                                                                                                                                                                                                             \end{pmatrix}\in\sl$, $[x,y:r]\in H(L)$ and $(\tau,\mathfrak{z})\in \mathbb{H}\times (L\otimes \C)$.  The two assignments jointly define an action of the Jacobi group on the space of holomorphic functions. Let $\varphi$ be a holomorphic function satisfying 
\[\forall g\in\Gamma^{J}(L)\,:\,\left.\varphi\right|_{k,t}g=\chi(g)\,\varphi\]
where $\chi$ is a finite character for the Jacobi group. 
Then $\varphi$ is periodic in $\tau$ and $\mathfrak{z}$ since the identities
\[\begin{aligned}
   &\varphi(\tau+1,\mathfrak{z})=e^{2\pi i\frac{D}{24}}\varphi(\tau,\mathfrak{z})\,,\,\varphi(\tau,\mathfrak{z}+2y)=\nu(0,2y:0)\, \varphi(\tau,\mathfrak{z})=\varphi(\tau,\mathfrak{z})&
  \end{aligned}\]
hold true for any $y\in L$ and for some $D\in\N$. Thus the function has a Fourier expansion of the shape
\begin{equation}\label{Fourier expansion of weak Jacobi form}
 \varphi(\tau,\mathfrak{z})=\sum_{\substack{n\in\Q,n\equiv \frac{D}{24}\bmod \Z\\l\in\frac{1}{2}L^{\vee}}}{f(n,l)e^{2\pi i (n\tau+(l,\mathfrak{z}))}}\,.
\end{equation}
We now introduce the notion of a Jacobi form.
\begin{defin}\label{def: Jacobi forms}
 Let $k\in\Z$ and $t\in\Q_{\geq 0}$. A holomorphic function 
\[\varphi\,:\,\mathbb{H}\times(L\otimes \C)\to\C\]
 is called a \textit{weak Jacobi form}\index{Jacobi form} of weight $k$ and index $t$ with character $\chi$ if it satisfies
\[\forall g\in\Gamma^{J}(L)\,:\,\left.\varphi\right|_{k,t}g=\chi(g)\,\varphi\]
and has a Fourier expansion as in (\ref{Fourier expansion of weak Jacobi form}) where additionally
\[f(n,l)\neq 0\implies n\geq 0\,.\]
 We call $\varphi$ a \textit{holomorphic Jacobi form} if the Fourier expansion ranges over all $n,l$ with $2nt-(l,l)\geq 0$ and $\varphi$ is called  a \textit{Jacobi cusp form} if it ranges over all $n,l$ satisfying $2nt-(l,l)> 0$. We denote by $J^{\textit{(weak)}}_{k,L;t}(\chi)$\index{$J^{\textit{(weak)}}_{k,L;t}(\chi)$} the vector space of weak Jacobi forms and the corresponding spaces of holomorphic forms and cusp forms are denoted by $J_{k,L;t}(\chi)$\index{$J_{k,L;t}(\chi)$} and $J^{\textit{(cusp)}}_{k,L;t}(\chi)$\index{$J^{\textit{(cusp)}}_{k,L;t}(\chi)$}, respectively. If the character is trivial we also write $J_{k,L;t}=J_{k,L;t}(1)$\index{$J_{k,L;t}$} for short.
\end{defin}
\begin{rem}
\begin{enumerate}[(a)]
 \item The action can be extended for $k\in\frac{1}{2}\Z$ and $\chi|_{\sl}$ being a multiplier system for $\sl$. Here we have to replace $\sl$ by the metaplectic cover $\operatorname{Mp}(2,\Z)$, see e.g. \cite{Br}. In this situation we also use the notation $J^{(\ast)}_{k,L;t}(\chi)$ for the spaces of Jacobi forms. 
\item A Jacobi form is called \textit{singular} if it has weight $N/2$. 
 \item The notion of a Jacobi form is compatible with Definition \ref{def:modular form}. To see this we note that we have an affine model for the homogeneous domain $\mathcal{D}$ given by
\begin{equation}\label{affine model for hom domain}
\mathcal{H}(L_{2})=
\left\{\begin{pmatrix}
         \omega\\\mathfrak{z}\\\tau
        \end{pmatrix}
\in \C\times (L\otimes \C)\times \C\,\left|\begin{aligned}
                                                                                                  &\omega_{i},\tau_{i}>0\,,&\\&2\omega_{i}\tau_{i}-(\mathfrak{z}_{i},\mathfrak{z}_{i})>0
                                                                                                 \end{aligned}\right.\right\}
\end{equation}
where we have used the abbreviations
\[\omega_{i}:=\operatorname{Im}(\omega)\quad,\quad \tau_{i}:=\operatorname{Im}(\tau)\quad,\quad \mathfrak{z}_{i}:=\operatorname{Im}(\mathfrak{z})\,.\]
 Let $\varphi\in J_{k,L;t}(\chi)$ where we assume $k\in\Z$. We define a holomorphic function  on $\mathcal{H}(L_{2})$ by
 \[\widetilde{\varphi}(\tau,\mathfrak{z})=\varphi(\tau,\mathfrak{z})e^{2\pi it\omega}\,.\]
Since  $\mathcal{D}$ and  $\mathcal{H}(L_{2})$ are biholomorphically equivalent we can interpret $\widetilde{\varphi}$ as an element of $\mathcal{M}_{k}(\Gamma^{J}(L),\chi)$.  
 \item Suppose $\varphi$ is a non-vanishing Jacobi form of index $t$ and weight $k$ with character $\chi=v_{\eta}^{D}\cdot\nu$. Consider the extended Jacobi form $\widetilde{\varphi}(\tau,\mathfrak{z})=\varphi(\tau,\mathfrak{z})e^{2\pi it\omega}$. By part (b) we know that $\widetilde{\varphi}$ can be interpreted as a modular form. 
If we transform this function by  $g=[0,0:1]\in H(L)$ an investigation of the modular behaviour on the affine domain $\mathcal{H}(L_{2})$ yields $e^{2\pi i t}=\nu([0,0:1])$ and Proposition \ref{prop:characters of Jaobi group} implies  
\[\nu([x,y:r])=e^{\pi i t((x,x)+(y,y)-(x,y)+2r)}\,.\] 
 \item  The Jacobi forms for a fixed lattice $L$ and with trivial character form a bigraded ring. The transformation behaviour of elements belonging to $J_{k_{1},L;t_{1}}J_{k_{1},L;t_{2}}$ is that of a Jacobi form of weight $k_{1}+k_{2}$ and index $t_{1}+t_{2}$. If both factors are holomorphic then the product is also a holomorphic Jacobi form by virtue of the identity
\[\begin{aligned}
   &k_{1}+k_{2}-\frac{(x+y,x+y)}{2(t_{1}+t_{2})}&\\
  &=\left(k_{1}-\frac{(x,x)}{2t_{1}}\right)+\left(k_{2}-\frac{(y,y)}{2t_{2}}\right)+\frac{(t_{1}y-t_{2}x,t_{1}y-t_{2}x)}{2t_{1}t_{2}(t_{1}+t_{2})}&
  \end{aligned}\]
which holds for all $k_{1},k_{2}\in\Q_{\geq 0}$\,,\,$t_{1},t_{2}>0$ and $x,y\in L\otimes \R$\,. 
\end{enumerate}
\end{rem}
\noindent We finish this section with several examples of Jacobi forms for the root lattice $A_{1}\cong(\Z,2x^{2})$ which correspond to Jacobi forms in the sense of \cite{EZ}. In the sequel we denote by $(\cdot,\cdot)_{N}$ the standard scalar product in $\R^{N}$. The standard basis of $\R^{N}$ is denoted by
\[\varepsilon_{1},\dots,\varepsilon_{N}\,.\] 
For $\tau\in\mathbb{H}$ we set $q:=\exp(2\pi i\tau)$.
\begin{exa}\label{eta function and Gritsenkos theta function}
 \begin{enumerate}[(a)]
  \item Dedekind's eta function is a Jacobi form of weight $1/2$ and index $0$ for every positive definite even lattice $L$, thus $\eta\in J_{1/2,L;0}^{\textit{(cusp)}}(v_{\eta})$ where $v_{\eta}$ is not a character but a multiplier system for $\sl$. For $m\in\Z$ one defines
\[\left(\frac{-4}{m}\right):=\begin{cases}
                        1&,\, m\equiv 1\bmod 4\\
                        -1&,\, m\equiv -1\bmod 4\\
                        0&,\, m\equiv 0\bmod 2
                       \end{cases}\,.\]
 We want to mention the following infinite sum-expansion for $\eta^{3}(\tau)$ 
 \begin{equation}\label{representation of eta to the cube}
  \eta^{3}(\tau)=\frac{1}{2}\sum_{n\in\Z\,,\, n\equiv 1 \bmod 2}{\left( \frac{-4}{n}\right)nq^{\frac{n^{2}}{8}}}
 \end{equation}
 which follows from the Jacobi triple identity.
 \item In \cite{EZ} the authors constructed the Jacobi-Eisenstein series $e_{k,1}\in J_{k,A_{1};1}$\index{$e_{k,1}$}. One defines a Jacobi cusp form $\varphi_{12,A_{1}}\in J_{12,A_{1};1}$ as \index{$\varphi_{12,A_{1}}$}
\[\varphi_{12,A_{1}}(\tau,z):=\frac{1}{144}(e_{4}^{2}(\tau)e_{4,1}(\tau,z)-e_{6}(\tau)e_{6,1}(\tau,z))\]
where
\[e_{k}(\tau)=1-\frac{2k}{B_{k}}\cdot\sum_{n=1}^{\infty}{\sigma_{k-1}(n)q^{n}}\]
\index{$e_{k}$} denotes the classical Eisenstein series of weight $k$ for the group $\sl$, confer e.g. \cite{KK}, p. 161.
 \item  We define
\[\phi_{0,1}(\tau,z):=\frac{\varphi_{12,A_{1}}(\tau,z)}{\eta^{24}(\tau)}=\frac{e_{4}^{2}(\tau)\,e_{4,1}(\tau,z)-e_{6}(\tau)\,e_{6,1}(\tau,z)}{\frac{1}{12} \,(e_{4}^{3}(\tau)-e_{6}^{2}(\tau))}\,.\]
\index{$\varphi_{0,1}$}
 In accordance with the classical theory of elliptic modular forms we write
\[\Delta_{12}(\tau):=\eta^{24}(\tau)\]
\index{$\Delta_{12}$} for the first cusp form for $\sl$.
One has $\phi_{0,1}\in J^{\textit{(weak)}}_{0,A_{1};1}(1)$ since
\[\phi_{0,1}(\tau,z)=\sum_{\substack{n,s\in\Z, n\geq 0\\4n-s^{2}\geq -1}}{q^{n}r^{s}}=(r+10+r^{-1})+q(\dots)\]
regarding the convention
\[r=\exp(2\pi i z)\,.\]
Moreover the function defined by $(\tau,z)\mapsto\eta^{6}(\tau)\phi_{0,1}(\tau,z)$ is a holomorphic Jacobi form of weight $3$ and index 1 for the lattice $A_{1}$ with Fourier expansion 
\[\eta^{6}(\tau)\phi_{0,1}(\tau,z)=q^{\frac{1}{4}}(r+10+r^{-1})+q^{\frac{5}{4}}(\cdots)\,.\]
  \item The Jacobi theta-series of characteristic $(\frac{1}{2},\frac{1}{2})$ is given by 
\[\vartheta(\tau,z)\index{$\vartheta(\tau,z)$}=\sum_{n\in\Z}{\left( \frac{-4}{n}\right)q^{\frac{n^{2}}{8}}r^{\frac{n}{2}}}=\sum_{n\in\Z\,,\,n \equiv 1\bmod 2}{(-1)^{\frac{n-1}{2}}\exp\left(\frac{\pi i n^{2}\tau}{4}+\pi i n z\right)}\]
where $q=e^{2\pi i \tau}\,,\,\tau\in\mathbb{H}$ and $r=e^{2\pi i z}\,,\,z\in\C$. 
This function was originally discovered by Carl Gustav Jacob Jacobi. In \cite{GN} the authors reinterpreted this function as a modular form of half-integral weight and index. The same proof as in \cite{KK}, section I.6 can be used to show that $\vartheta$ defines a holomorphic function on $\mathbb{H}\times \C$. 
The transformation behaviour under the action of $H(A_{1})$ is
\[\vartheta(\tau,z+x\tau+y)=(-1)^{x+y}\,\exp(-\pi i(x^{2}\tau+2 xz))\vartheta(\tau,z)\,.\]
Moreover we have
\[\vartheta(\tau+1,z)=\exp(\pi i/4)\vartheta(\tau,z)\]
and
\[\vartheta(-1/\tau,z/\tau)=\exp(-3\pi i/4)\tau^{1/2}\exp(\pi i z^{2}/\tau)\vartheta(\tau,z)\]
by the theta transformation formula. This shows that $\vartheta$ transforms like a Jacobi form of weight $1/2$ and index $1/2$. If we consider the extended Jacobi form $\widetilde{\vartheta}(\tau,z,\omega)=\vartheta(\tau,z)e^{\pi i \omega}$ we obtain
\[\begin{aligned}
   &\left.\widetilde{\vartheta}(\tau, z,\omega)\right|_{\frac{1}{2}}\,[x,y:r]&&=e^{\pi i(x+y)}\,\exp(-\pi i(x^{2}\tau+2 xz))\vartheta(\tau,z)&\\
&&&\times \exp(\pi i (\omega+2xz+x^{2}\tau+xy+r))&\\
  &&&=e^{\pi i(x+y+xy+r)}\,\widetilde{\vartheta}(\tau,z,\omega)&\\
&&&=:\nu_{H(A_{1})}([x,y:r])\,\widetilde{\vartheta}(\tau,z,\omega)&
  \end{aligned}\] 
From the infinite sum expansion we obtain
\[\left.\frac{\partial\vartheta(\tau,z)}{\partial z}\right|_{z=0}=\pi i\sum_{n\in\Z\,,\, n\equiv 1 \bmod 2}{\left( \frac{-4}{n}\right)nq^{\frac{n^{2}}{8}}}=2\pi i\eta(\tau)^{3}\] 
by (\ref{representation of eta to the cube}). This shows that $\vartheta$ transforms like a Jacobi form with mutiplier system $v_{\eta}^{3}\times \nu_{H(A_{1})}$\,. 
Moreover Jacobi's triple identity yields
\[\vartheta(\tau,z)=-q^{1/8}r^{-1/2}\prod_{n\geq 1}{(1-q^{n-1}r)(1-q^{n}r^{-1})(1-q^{n})}\,.\]
Since each summand in the infinite sum defining $\vartheta$ is of the shape $e^{2\pi i \tau n^{2}/8}e^{2\pi i\cdot 2(z,m/4)_{1}}$ we see that $\vartheta$ is a holomorphic Jacobi form. 
The function has the properties
\[\begin{aligned}
   &\vartheta(\tau,-z)&&=&&-\vartheta(\tau,z)\,,&\\&\vartheta(\tau,z+x\tau+y)&&=&&(-1)^{x+y}\exp(-\pi i(x^{2}\tau+2xz))\,\vartheta(\tau,z)&
  \end{aligned}\]
for all $x,y\in\Z$\,.
Hence the divisor of $\vartheta$ contains $\{z=x\tau+y\,|\,x,y\in\Z\}\,.$ Finally a classical argument from the theory of complex functions yields 
\[\operatorname{div}(\vartheta)=\{x\tau+y\,|\,x,y\in\Z\}\] compare for example \cite{FB}, section 5.6.  
 \end{enumerate}
\end{exa}

\section{Theta type Jacobi forms for several root lattices}\label{sec: theta type Jacobi forms}
\noindent In this section we give several pullback constructions for Jacobi forms. The constructions are motivated by \cite{G1} where the author constructed towers of reflective modular forms by means of Jacobi forms. All lattices appearing in this section are even. The next Lemma can be found in \cite{CG}, Proposition 3.1.
\begin{lem}
 Let $L\leq M$ be a sublattice of $M$ such that $\rank L < \rank M$ and $\varphi\in J_{k,M,t}(\chi)$ be a Jacobi form of weight $k$ and index $t$ for the character $\chi$. Consider the decomposition $\mathfrak{z}_{M}=\mathfrak{z}_{L}\oplus\mathfrak{z}_{L^{\perp}}\in M\otimes\C=(L\oplus (L)_{M}^{\perp})\otimes \C$. We define the pullback of $\varphi$ to $L$ as the function  $\varphi\downharpoonright_{L}$ on $\mathbb{H}\times
 (L\otimes\C)$ defined by
\[\varphi\downharpoonright_{L}(\tau,\mathfrak{z}_{L}):=\varphi(\tau,\mathfrak{z}_{L}\oplus 0)\,.\]
Then $\varphi\downharpoonright_{L}\in  J_{k,L,t}(\left.\chi\right|_{L})$ and the pullback maps cusp forms to cusp forms.
\end{lem}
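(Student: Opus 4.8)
The plan is to derive each defining property of a holomorphic Jacobi form for $\varphi\downharpoonright_{L}$ from the corresponding property of $\varphi$, using throughout that the decomposition $M\otimes\C=(L\oplus L_{M}^{\perp})\otimes\C$ is compatible with the bilinear form, with the $\sl$-action and with the Heisenberg translations. Holomorphy of $\varphi\downharpoonright_{L}$ on $\mathbb{H}\times(L\otimes\C)$ is immediate: it is the composition of $\varphi$ with the holomorphic map $(\tau,\mathfrak{z}_{L})\mapsto(\tau,\mathfrak{z}_{L}\oplus 0)$.

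For the transformation behaviour I would first observe that $L\subseteq M$ induces an embedding $H(L)\hookrightarrow H(M)$ which is equivariant for the $\sl$-conjugation, hence an embedding $\JG{L}\hookrightarrow\JG{M}$; we let $\chi|_{L}$ denote the pullback of $\chi$ along it, which by Proposition \ref{prop:characters of Jaobi group} is again a finite character (resp. multiplier system) for $\JG{L}$ of the same index $t$. It then suffices to check $\left.\varphi\downharpoonright_{L}\right|_{k,t}g=\chi|_{L}(g)\,\varphi\downharpoonright_{L}$ on the two types of generators. For $A\in\sl$ this is automatic, because the $\sl$-part of the action in (\ref{definition action of Jacobi group}) only rescales $\tau$ and $\mathfrak{z}$ and involves $(\mathfrak{z},\mathfrak{z})$, while $(\mathfrak{z}_{L}\oplus 0,\mathfrak{z}_{L}\oplus 0)_{M}=(\mathfrak{z}_{L},\mathfrak{z}_{L})_{L}$ and $(c\tau+d)^{-1}(\mathfrak{z}_{L}\oplus 0)=((c\tau+d)^{-1}\mathfrak{z}_{L})\oplus 0$, so evaluating the $M$-identity at $\mathfrak{z}_{L}\oplus 0$ returns the $L$-identity. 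For $[x,y:r]$ with $x,y\in L$ the argument $\mathfrak{z}_{L}\oplus 0$ is translated to $(\mathfrak{z}_{L}+x\tau+y)\oplus 0$, still of the form $(\,\cdot\,)\oplus 0$, and the scalars $(x,x)$, $(x,y)$, $(x,\mathfrak{z}_{L}\oplus 0)$ coincide with their $L$-versions, so again the $M$-identity restricts to the $L$-identity. This computational core is routine.

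The genuine, if mild, point is the Fourier side, which I expect to be the only step needing care. Write $\pi_{L}$, $\pi_{L_{M}^{\perp}}$ for the orthogonal projections attached to $M\otimes\Q=(L\oplus L_{M}^{\perp})\otimes\Q$ (well defined since $M$, hence every subspace, is positive definite). Putting $\mathfrak{z}_{L_{M}^{\perp}}=0$ in the expansion (\ref{Fourier expansion of weak Jacobi form}) of $\varphi$ and splitting $(l,\mathfrak{z}_{M})=(\pi_{L}(l),\mathfrak{z}_{L})$, one checks $\pi_{L}(\tfrac12 M^{\vee})\subseteq\tfrac12 L^{\vee}$: if $2l\in M^{\vee}$ then $(2\pi_{L}(l),u)=(2l,u)\in\Z$ for all $u\in L\subseteq M$. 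Since $\varphi$ is holomorphic, for each fixed $n$ only the finitely many $l$ with $(l,l)\le 2nt$ contribute, so regrouping within each $n$-layer is a finite sum and yields
\[\varphi\downharpoonright_{L}(\tau,\mathfrak{z}_{L})=\sum_{\substack{n\in\Q\\ \ell\in\frac12 L^{\vee}}}\Bigl(\,\sum_{l:\,\pi_{L}(l)=\ell}f(n,l)\Bigr)\,e^{2\pi i(n\tau+(\ell,\mathfrak{z}_{L}))}\,,\]
which has the shape (\ref{Fourier expansion of weak Jacobi form}) with the same $D$.

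Finally, positive definiteness of $L_{M}^{\perp}$ gives the Pythagorean bound $(l,l)=(\pi_{L}(l),\pi_{L}(l))+(\pi_{L_{M}^{\perp}}(l),\pi_{L_{M}^{\perp}}(l))\ge(\pi_{L}(l),\pi_{L}(l))$. Hence whenever the coefficient of $\varphi\downharpoonright_{L}$ at $(n,\ell)$ is nonzero, some $l$ with $\pi_{L}(l)=\ell$ has $f(n,l)\neq 0$, so $2nt-(\ell,\ell)\ge 2nt-(l,l)\ge 0$, which shows $\varphi\downharpoonright_{L}$ is a holomorphic Jacobi form; the same estimate with strict inequalities shows cusp forms pull back to cusp forms, and the condition $n\ge 0$ defining the weak space is inherited verbatim. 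The only thing demanding attention is keeping this index-set and definiteness bookkeeping straight; I do not anticipate a serious obstacle.
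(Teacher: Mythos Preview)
Your argument is correct and complete. The paper itself does not supply a proof of this lemma—it simply cites \cite{CG}, Proposition~3.1—so there is nothing to compare against; your direct verification of the transformation laws via the inclusion $\JG{L}\hookrightarrow\JG{M}$ together with the Pythagorean estimate $(l,l)\ge(\pi_{L}(l),\pi_{L}(l))$ on the Fourier side is precisely the standard argument one expects.
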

\begin{defin}\label{def:pullbak notation}
 Let $L\leq M$ be a sublattice of $M$ such that $\rank L < \rank M$ and $\varphi\in J_{k,M,t}^{\textit{(weak)}}$ be a Jacobi form of weight $k$ and index $t$.  Let $\psi\in J_{k,L;t}^{\textit{(weak)}}$. We say that $\psi$ is a pullback of $\varphi$ if there exists some $\alpha\in \C^{\times}$ such that 
\(\psi=\alpha \cdot\varphi\downharpoonright_{L}\,.\)
In this case we use the notation $\varphi\to \psi$\,. We set $\varphi\downharpoonright_{L}:=\varphi$ if $\rank L =\rank M$.
\end{defin}
\noindent Let $L\subseteq \R^{m}$ be a positive definite even lattice. We define
\[\vartheta_{L}\,:\,\mathbb{H}\times(L\otimes\C)\to\C\quad,\quad \vartheta_{L}(\tau,\mathfrak{z}):=\prod_{j=1}^{m}{\vartheta(\tau,(\mathfrak{z},\varepsilon_{j}))}\,.\]
\noindent We now introduce Jacobi forms of theta type.
\begin{defin}\label{def:theta type JF}
 Let $L\subseteq \R^{m}$ be a positive definite even lattice and $\varphi\in J_{k,L;t}$. We say that $\varphi$ is of theta type if there exists a sublattice $L'\subseteq L$, $\alpha\in\C^{\times}$ and integers $a,b\in\Z_{\geq 0}$ such that
\[\varphi\downharpoonright_{L'} (\tau,\mathfrak{z}')=\alpha\cdot\eta(\tau)^{a}\,\vartheta_{L'}(\tau,\mathfrak{z}')^{b}\,.\] 
\end{defin}
\begin{prop}\label{prop:theta type construction for arbitrary root lattice}
 Let $L\subseteq \R^{m}$ be a positive definite even lattice. We assume that its bilinear form is $(\cdot,\cdot):=(\cdot,\cdot)_{m}$.  
\begin{enumerate}[(a)]
 \item The function $\vartheta_{L}$ transforms like a Jacobi form of index $1$ and weight $m/2$ for the lattice $L$ with a character having trivial Heisenberg part. The Fourier expansion of this function is of the shape
\[ \vartheta_{L}(\tau,\mathfrak{z})=\sum_{\substack{n\in\Q_{\geq 0},n\equiv \frac{D}{24}\bmod \Z\\l\in\frac{1}{2}\Z^{m}\\2n-(l,l)= 0}}{f(n,l)e^{2\pi i (n\tau+(l,\mathfrak{z}))}}\]
for some $D\in\N$.
 \item Let $L\leq M$ and suppose $J\subseteq\{1,\dots,m\}$ such that $M\subseteq \langle \varepsilon_{j}|j\in J\rangle_{\R}$. We assume that  there exists a $\kappa\in J$ such that 
\[L=M\cap \langle \varepsilon_{j}|j\in J\,,\,j\neq \kappa\rangle_{\R}\,.\]
We define a function $\prescript{}{L(2)}{\prod}^{M(2)}(\vartheta_{L}^{2},\phi_{0,1})\,:\,\mathbb{H}\times (M(2)\otimes \C)\to\C$ by
\[\prescript{}{L(2)}{\prod}^{M(2)}(\vartheta_{L}^{2},\phi_{0,1})(\tau,\mathfrak{z}):=\phi_{0,1}(\tau,(\mathfrak{z},\varepsilon_{\kappa}))\cdot\vartheta_{L}^{2}(\tau,\mathfrak{z})\,.\]
Then $\prescript{}{L(2)}{\prod}^{M(2)}(\vartheta_{L}^{2},\phi_{0,1})$ transforms like a Jacobi form of index 2 for the lattice $M$ with a character having trivial Heisenberg part and satisfies $\prescript{}{L(2)}{\prod}^{M(2)}(\vartheta_{L}^{2},\phi_{0,1})\to\vartheta_{L}^{2}$.
\end{enumerate}
\end{prop}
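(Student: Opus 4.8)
For both parts the strategy is the same: write the function as a product of one-variable Jacobi forms in the coordinates $(\mathfrak z,\varepsilon_j)$, whose transformation behaviour under $\sl$ and under the Heisenberg group is recorded in Example~\ref{eta function and Gritsenkos theta function}, multiply these transformation laws factor by factor, and use the elementary identity $\sum_{j}(\mathfrak z,\varepsilon_j)^2=(\mathfrak z,\mathfrak z)_m=(\mathfrak z,\mathfrak z)$ to recombine the resulting index-exponentials into the single exponential attached to the lattice form in \eqref{definition action of Jacobi group}.

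For part~(a) I would start from $\vartheta_L(\tau,\mathfrak z)=\prod_{j=1}^m\vartheta(\tau,(\mathfrak z,\varepsilon_j))$, which is holomorphic as a finite product of holomorphic functions, and substitute the transformation formulas for $\vartheta(\tau,z)$ into each factor; for the Heisenberg action one uses that $(x,\varepsilon_j),(y,\varepsilon_j)\in\Z$. Multiplying over $j=1,\dots,m$, the cocycles $(c\tau+d)^{1/2}$ accumulate to $(c\tau+d)^{m/2}$ and the $\sl$-multiplier $v_\eta^{3}$ to $v_\eta^{3m}$, while the index-exponentials $\exp\bigl(\pi i\,c(\mathfrak z,\varepsilon_j)^2/(c\tau+d)\bigr)$ combine, by the sum-of-squares identity, to $\exp\bigl(\pi i\,c(\mathfrak z,\mathfrak z)/(c\tau+d)\bigr)$, which by \eqref{definition action of Jacobi group} is exactly index $t=1$. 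The same bookkeeping on the Heisenberg side yields, besides the required elliptic factor, the sign $(-1)^{\sum_j(x,\varepsilon_j)+\sum_j(y,\varepsilon_j)}$, and this equals $1$ because $\sum_j(x,\varepsilon_j)^2=(x,x)\in2\Z$ forces $\sum_j(x,\varepsilon_j)$ to be even; hence the Heisenberg part of the character is trivial. For the Fourier expansion I would multiply the expansions $\vartheta(\tau,z)=\sum_{n\text{ odd}}\bigl(\tfrac{-4}{n}\bigr)q^{n^2/8}e^{\pi i n z}$: a general term of $\vartheta_L$ is indexed by an $m$-tuple of odd integers $(n_1,\dots,n_m)$, has $q$-exponent $n=\tfrac18\sum_j n_j^2$ and exponential $e^{2\pi i(l,\mathfrak z)}$ with $l=\tfrac12\sum_j n_j\varepsilon_j\in\tfrac12\Z^m$, whence $2n-(l,l)=\tfrac14\sum_j n_j^2-\tfrac14\sum_j n_j^2=0$ and $n\equiv m/8\pmod{\Z}$, that is $D=3m$.

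For part~(b) I would apply the same mechanism to $\Phi:=\prescript{}{L(2)}{\prod}^{M(2)}(\vartheta_L^2,\phi_{0,1})=\phi_{0,1}(\tau,(\mathfrak z,\varepsilon_\kappa))\cdot\vartheta_L^2(\tau,\mathfrak z)$. The hypotheses $M\subseteq\langle\varepsilon_j\mid j\in J\rangle_\R$ and $L=M\cap\langle\varepsilon_j\mid j\in J,\ j\neq\kappa\rangle_\R$ say that $\vartheta_L^2$ depends only on the coordinates $(\mathfrak z,\varepsilon_j)$ with $j\in J\setminus\{\kappa\}$ and that $\varepsilon_\kappa$ splits off orthogonally, so the index-exponentials of the two factors, namely $\exp\bigl(2\pi i\,c(\mathfrak z,\varepsilon_\kappa)^2/(c\tau+d)\bigr)$ coming from $\phi_{0,1}$ (weight $0$, index $1$ in the classical normalisation, by Example~\ref{eta function and Gritsenkos theta function}(c)) and $\exp\bigl(2\pi i\,c\sum_{j\neq\kappa}(\mathfrak z,\varepsilon_j)^2/(c\tau+d)\bigr)$ coming from $\vartheta_L^2$ (index $2$ for $L$ by part~(a) squared), combine to $\exp\bigl(2\pi i\,c(\mathfrak z,\mathfrak z)_m/(c\tau+d)\bigr)$, the index-$2$ exponential for $M$; the weights add, the multipliers multiply, the Heisenberg parts of both factors are trivial, and holomorphy together with the bound $f(n,l)\neq0\Rightarrow n\geq0$ is inherited from the two factors. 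For the pullback I would restrict to $\mathfrak z\in L\otimes\C$, where $(\mathfrak z,\varepsilon_\kappa)=0$ because $\varepsilon_\kappa\perp L$, so that $\Phi\downharpoonright_L(\tau,\mathfrak z)=\phi_{0,1}(\tau,0)\cdot\vartheta_L^2(\tau,\mathfrak z)$; since $\phi_{0,1}(\tau,0)$ is a weight-$0$ holomorphic modular form for $\sl$ it equals the constant $12$ (its $q^0r^0$-coefficient), a non-zero scalar, and therefore $\Phi\to\vartheta_L^2$ in the sense of Definition~\ref{def:pullbak notation}.

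The step requiring genuine care is the computation of the characters, in particular checking that multiplying the theta type factors does not introduce a nontrivial quadratic Heisenberg character. Apart from the direct verification above, one can also invoke the rigidity of Proposition~\ref{prop:characters of Jaobi group}(b) together with the remark following Definition~\ref{def: Jacobi forms}: a non-zero Jacobi form of integral index with respect to an even lattice automatically has trivial Heisenberg part.
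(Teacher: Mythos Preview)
Your proof is correct and follows essentially the same route as the paper: factor $\vartheta_L$ (resp.\ $\Phi$) into one-variable Jacobi forms along the coordinates $(\mathfrak z,\varepsilon_j)$, multiply the transformation laws using $\sum_j(\mathfrak z,\varepsilon_j)^2=(\mathfrak z,\mathfrak z)$ and the parity observation $\sum_j(x,\varepsilon_j)\equiv (x,x)\equiv 0\pmod 2$ to kill the Heisenberg sign, and read off the Fourier expansion termwise; for the pullback in (b) both you and the paper invoke $(\mathfrak z_L,\varepsilon_\kappa)=0$ and $\phi_{0,1}(\tau,0)=12$. Your write-up is in fact slightly more explicit than the paper's (you compute $D=3m$ and spell out why $\phi_{0,1}(\tau,0)$ is a nonzero constant), and your closing remark that Proposition~\ref{prop:characters of Jaobi group}(b) gives an alternative check of the Heisenberg triviality is a useful addition.
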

\begin{proof}
\begin{enumerate}[(a)]
 \item   For any $\mathfrak{z}\in L\otimes \C$ the identity 
\[\mathfrak{z}=\sum_{j=1}^{m}{(\mathfrak{z},\varepsilon_{j})\varepsilon_{j}}\]
is true.  
Hence $(\mathfrak{z},\mathfrak{z})=\sum_{j=1}^{m}{(\mathfrak{z},\varepsilon_{j})^{2}}$ and for any $x\in L$ we obtain
\[\sum_{j=1}^{m}{(x,\varepsilon_{j})}=\sum_{j=1}^{m}{(x,\varepsilon_{j})^{2}}=(x,x)=0\bmod 2\Z\,.\]
From Example \ref{eta function and Gritsenkos theta function} (d) we deduce that \(\vartheta_{L}\) has the transformation behaviour and character claimed above. The same example yields that each summand of the Fourier expansion is of the shape
\[\alpha\prod_{j\in J}{\left(q^{\frac{n_{j}^{2}}{8}}\exp(\pi i n_{j}(\mathfrak{z},\varepsilon_{j}))\right)}\]
where $J\subseteq \{1,\dots, m\}\,,\,n_{j}\in\Z$ and $\alpha \in \Z$. Note that $\sharp J\geq \operatorname{rank}(L)$. Now define 
\[v:=\frac{1}{2}\sum_{j\in J}{n_{j}\varepsilon_{j}}\in \frac{1}{2}\Z^{m}\]
to write the above summand as
\[\alpha\exp(2\pi i (\mathfrak{z},v))\cdot\prod_{j\in J}{q^{\frac{n_{j}^{2}}{8}}}\,.\]
This completes the proof of part (a).

\item The statement follows from the identity
\[\prescript{}{L(2)}{\prod}^{M(2)}(\vartheta_{L}^{2},\phi_{0,1})(\tau,\mathfrak{z})=\phi_{0,1}(\tau,(\mathfrak{z},\varepsilon_{\kappa}))\cdot\prod_{j\in J\backslash \{\kappa\}}{\vartheta^{2}(\tau,(\mathfrak{z},\varepsilon_{j}))}\]
and an analogue investigation as in part (a). The statement on the pullback follows from  the facts $\phi_{0,1}(\tau,0)=12$ and $(\mathfrak{z}_{L},\varepsilon_{\kappa})=0$ if $\mathfrak{z}_{L}\in L\otimes\C$.
\end{enumerate}
\end{proof}
\noindent We now restrict ourselves to some special types of (root) lattices. We first note that for any $t,k\in\N$ the space of classical Jacobi forms of weight $k$ and index $t$ as defined in \cite{EZ} coincides with the space $J_{k,A_{1}(t);1}=J_{k,A_{1};t^{2}}$. For the next Lemma we consider the lattice $NA_{1}$ which can be realized as $\Z^{N}$ equipped with the bilinear form $2(\cdot,\cdot)_{N}$. Note that every $\mathfrak{z}\in NA_{1}\otimes\C$ can be written in the form 
\begin{equation}\label{choice of coordinates for NA1}
\mathfrak{z}=\sum_{j=1}^{N}{z_{j}\varepsilon_{j}}\quad,\quad z_{j}\in\C\,. 
\end{equation}
\begin{lem}\label{lemma:tensor construction for NA1}
 Let $N,t,k\in\N$ and $\varphi\in J_{k,NA_{1},t^{2}}^{\textit{(weak)}}$. We define a function
\[\varphi\otimes\phi_{0,1}\,:\,\mathbb{H}\times ((N+1)A_{1}\otimes\C)\to\C \]
where
\[\varphi\otimes\phi_{0,1}(\tau,\mathfrak{z}_{(N+1)A_{1}}):=\varphi(\tau,\mathfrak{z}_{NA_{1}})\phi_{0,1}(\tau,tz_{N+1})\]
is an element of $J^{(\textit{weak})}_{k,(N+1)A_{1};t^{2}}$. Moreover we have
\[\varphi\otimes\phi_{0,1}\to\varphi\,.\]
\end{lem}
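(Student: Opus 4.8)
The statement is essentially a special case of the tensor/product structure already developed, so the proof is a matter of unwinding definitions and invoking earlier results. The plan is to verify the three defining properties of a weak Jacobi form of weight $k$ and index $t^{2}$ for the lattice $(N+1)A_{1}$ — namely holomorphy, the transformation law under $\Gamma^{J}((N+1)A_{1})$, and the boundedness condition $f(n,l)\neq 0\Rightarrow n\geq 0$ on the Fourier expansion — and then compute the pullback to $NA_{1}$.

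First I would set up coordinates. By \eqref{choice of coordinates for NA1} we write $\mathfrak{z}_{(N+1)A_{1}}=\sum_{j=1}^{N+1}z_{j}\varepsilon_{j}$, so that $\mathfrak{z}_{NA_{1}}=\sum_{j=1}^{N}z_{j}\varepsilon_{j}$ is the orthogonal projection onto the first $N$ copies of $A_{1}$, and $z_{N+1}$ is the coordinate of the last copy. Since $A_{1}\cong(\Z,2x^{2})$, the map $z\mapsto\phi_{0,1}(\tau,tz)$ is exactly the realization of a classical weak Jacobi form of index $t^{2}$ for the single copy $A_{1}$: indeed $\phi_{0,1}\in J^{\textit{(weak)}}_{0,A_{1};1}$ by Example~\ref{eta function and Gritsenkos theta function}(c), and rescaling $z\mapsto tz$ sends index $1$ for $A_{1}(t)$ to index $1$, i.e.\ index $t^{2}$ for $A_{1}$, as remarked just before the Lemma ($J_{k,A_{1}(t);1}=J_{k,A_{1};t^{2}}$). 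Holomorphy of $\varphi\otimes\phi_{0,1}$ on $\mathbb{H}\times((N+1)A_{1}\otimes\C)$ is then immediate from holomorphy of each factor.

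For the transformation law, I would use that $(N+1)A_{1}=NA_{1}\oplus A_{1}$ is an orthogonal direct sum, hence the Jacobi group $\Gamma^{J}((N+1)A_{1})$ is generated by the common $\operatorname{SL}(2,\Z)$ together with the Heisenberg elements $[x,y:r]$ with $x,y$ supported on either the first $N$ coordinates or on the last one. For $A\in\operatorname{SL}(2,\Z)$ the cocycle factor in \eqref{definition action of Jacobi group} splits as $(c\tau+d)^{-k}e^{-\pi i t^{2}c(\mathfrak{z},\mathfrak{z})/(c\tau+d)}$ with $(\mathfrak{z},\mathfrak{z})=(\mathfrak{z}_{NA_{1}},\mathfrak{z}_{NA_{1}})+2z_{N+1}^{2}$ (the $2$ coming from the $A_{1}$-form), which is precisely the product of the index-$t^{2}$ factors for $NA_{1}$ and for $A_{1}$; since $\varphi$ has weight $k$ and $\phi_{0,1}(\tau,t\,\cdot)$ has weight $0$, the weights add to $k$ and the $\operatorname{SL}(2,\Z)$-transformation goes through, the characters multiplying. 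For a Heisenberg element supported on the first $N$ coordinates only $\varphi$ transforms (and $\phi_{0,1}(\tau,tz_{N+1})$ is untouched), and symmetrically for one supported on the last coordinate; in each case the exponential factor in \eqref{definition action of Jacobi group} is exactly the index-$t^{2}$ factor for the relevant sublattice. So $\varphi\otimes\phi_{0,1}$ satisfies the full transformation law with the product character, which by Proposition~\ref{prop:characters of Jaobi group} and Remark (d) is again admissible for index $t^{2}$. The weak-growth condition at the cusp follows because a Fourier coefficient of the product is a convolution of those of $\varphi$ and of $\phi_{0,1}(\tau,t\,\cdot)$, both of which are supported on $n\geq 0$.

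Finally, the pullback: by Definition~\ref{def:pullbak notation} applied with $L=NA_{1}\leq M=(N+1)A_{1}$, restricting to $\mathfrak{z}_{NA_{1}}\in NA_{1}\otimes\C$ means setting $z_{N+1}=0$, giving $(\varphi\otimes\phi_{0,1})\!\downharpoonright_{NA_{1}}(\tau,\mathfrak{z}_{NA_{1}})=\varphi(\tau,\mathfrak{z}_{NA_{1}})\,\phi_{0,1}(\tau,0)=12\,\varphi(\tau,\mathfrak{z}_{NA_{1}})$, using $\phi_{0,1}(\tau,0)=12$ as in the proof of Proposition~\ref{prop:theta type construction for arbitrary root lattice}(b). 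Thus with $\alpha=\tfrac{1}{12}\in\C^{\times}$ we have $\varphi=\alpha\cdot(\varphi\otimes\phi_{0,1})\!\downharpoonright_{NA_{1}}$, i.e.\ $\varphi\otimes\phi_{0,1}\to\varphi$. I do not expect any genuine obstacle here; the only point requiring a little care is bookkeeping the index rescaling $t\mapsto t^{2}$ correctly and checking that the product character really does have the form forced by Proposition~\ref{prop:characters of Jaobi group}, but this is the same computation already carried out in Remark (d) and in the proof of Proposition~\ref{prop:theta type construction for arbitrary root lattice}.
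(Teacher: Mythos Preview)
Your proposal is correct and follows essentially the same route as the paper: decompose $\mathfrak{z}=\mathfrak{z}_{NA_{1}}\oplus z_{N+1}\varepsilon_{N+1}$, verify the $\operatorname{SL}(2,\Z)$- and Heisenberg-transformation laws for the $\phi_{0,1}(\tau,tz_{N+1})$-factor (the paper writes these out explicitly, you argue them from the orthogonal splitting of the cocycle), and then read off the Fourier support. If anything your write-up is more complete than the paper's: the paper's proof does not spell out the pullback claim $\varphi\otimes\phi_{0,1}\to\varphi$, whereas you correctly compute it via $\phi_{0,1}(\tau,0)=12$. Your remarks on the character are harmless but unnecessary here, since both factors carry the trivial character.
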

\begin{proof}
 Let $\tau\in \mathbb{H},\mathfrak{z}\in ((N+1)A_{1}\otimes\C)$ and $x,y\in (N+1)A_{1}$. We consider the decomposition for $\mathfrak{z}$
\[\mathfrak{z}=\mathfrak{z}_{NA_{1}}\oplus z_{N+1}\varepsilon_{N+1}\]
and decompose $x,y$ in the same manner. 
For every $A\in\sl$ and $x,y\in \Z$ one has
\[\begin{aligned}
   &\phi_{0,1}\left(\frac{a\tau+b}{c\tau +d},\frac{tz_{N+1}}{c\tau+d}\right)=&\\&\qquad\qquad\qquad(c\tau+d)^{k}\exp\left(2t^{2}\pi i\frac{z_{n+1}^{2}}{c\tau+d}\right)\phi_{0,1}(\tau,tz_{N+1})&\\
   &\phi_{0,1}(\tau,tz_{N+1}+tx_{N+1}\tau+ty_{N+1})=&\\&\qquad\qquad\qquad\exp(-2\pi it^{2}(x_{N+1}^{2}+2x_{N+1}z_{N+1}))\,\phi_{0,1}(\tau,tz_{N+1})&
  \end{aligned}\]
which shows that $\varphi\otimes\phi_{0,1}$ has the correct transformation behaviour. Since $NA_{1}(t^{2})\subseteq \R^{N}$ we conclude that $\varphi\otimes\phi_{0,1}$ has a Fourier expansion of the shape 
\[ \varphi\otimes\phi_{0,1}(\tau,\mathfrak{z})=\sum_{\substack{n\in\N\\l\in\frac{1}{2t^{2}}\Z^{N}}}{f(n,l)e^{2\pi i (n\tau+(l,\mathfrak{z}))}}\]
where $(\cdot,\cdot):=2t^{2}(\cdot,\cdot)_{N}$.
\end{proof}
\noindent Following \cite{G1} and \cite{CG} we can construct Jacobi forms for the special case $N\leq 4$ and $t=1$. \newpage
\begin{prop}\label{prop:singular theta series for 4A1}
Let $N\in\{1,\dots, 4\}$ and consider the coordinates (\ref{choice of coordinates for NA1}).
 \begin{enumerate}[(a)]
\item We have $\vartheta_{NA_{1}}\in J_{N/2,NA_{1};\frac{1}{2}}(v_{\eta}^{3N}\times \nu_{H(NA_{1})})$
where 
\[\nu_{H(NA_{1})}([x,y:r])=(-1)^{r+\sum_{j=1}^{N}{x_{j}+y_{j}+x_{j}y_{j}}}\quad \text{for all }[x,y:r]\in H(NA_{1})\,.\]
 \item We define the functions
\[\begin{aligned}
&\varrho_{6-N,NA_{1}},\psi_{12-2N,NA_{1}}\,:\,\mathbb{H}\times (NA_{1}\otimes \C)\to\C\,,&\\
& \varrho_{6-N,NA_{1}}(\tau,\mathfrak{z}_{NA_{1}}):=\eta(\tau)^{12-3N}\vartheta_{NA_{1}}(\tau,\mathfrak{z}_{NA_{1}})\,,&\\
&\psi_{12-2N,NA_{1}}:=\varrho_{6-N,NA_{1}}^{2}\,.&   
  \end{aligned}\]
Then  $\varrho_{6-N,NA_{1}}$ belongs to $J_{6-N,NA_{1};\frac{1}{2}}(v_{\eta}^{12}\times \nu_{H(NA_{1})})$ and $\psi_{12-2N,NA_{1}}$ belongs to $J_{12-2N,NA_{1};1}$. 
 \end{enumerate}
\end{prop}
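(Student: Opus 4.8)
The plan is to verify the transformation laws factor by factor, reducing everything to the one–variable behaviour of $\vartheta$ and $\eta$ recorded in Example \ref{eta function and Gritsenkos theta function}, together with the bigraded ring structure noted in the Remark.

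\textbf{Part (a).} First I would apply Proposition \ref{prop:theta type construction for arbitrary root lattice}(a) to the lattice $L=NA_1$, realized as $\Z^N$ with form $2(\cdot,\cdot)_N$, so that $\varepsilon_1,\dots,\varepsilon_N$ is an \emph{orthogonal} basis with $(\varepsilon_j,\varepsilon_j)=2$. Then $\vartheta_{NA_1}(\tau,\mathfrak z)=\prod_{j=1}^N\vartheta(\tau,z_j)$ in the coordinates (\ref{choice of coordinates for NA1}), and the general statement already gives that this transforms like a Jacobi form of index $1$ and weight $N/2$ with trivial Heisenberg part of the character. To pin down the character precisely I would compute the $\operatorname{SL}(2,\Z)$-factor and the Heisenberg factor directly from the product. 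For $S=\left(\begin{smallmatrix}0&-1\\1&0\end{smallmatrix}\right)$ and $T=\left(\begin{smallmatrix}1&1\\0&1\end{smallmatrix}\right)$, the one-variable formulas $\vartheta(\tau+1,z)=e^{\pi i/4}\vartheta(\tau,z)$ and $\vartheta(-1/\tau,z/\tau)=e^{-3\pi i/4}\tau^{1/2}e^{\pi i z^2/\tau}\vartheta(\tau,z)$ raised to the $N$th power show the $\operatorname{SL}(2,\Z)$-multiplier is $v_\eta^{3N}$ (matching $\vartheta\sim v_\eta^3$ from Example \ref{eta function and Gritsenkos theta function}(d), where $\partial_z\vartheta|_{z=0}=2\pi i\eta^3$). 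For the Heisenberg part I would plug $[x,y:r]$ into the product using the one-variable law $\vartheta(\tau,z_j+x_j\tau+y_j)=(-1)^{x_j+y_j}e^{-\pi i(x_j^2\tau+2x_jz_j)}\vartheta(\tau,z_j)$ and compare against the action (\ref{definition action of Jacobi group}) at index $t=\tfrac12$; the sign bookkeeping (exactly as in the extended-form computation for $\widetilde\vartheta$ in Example \ref{eta function and Gritsenkos theta function}(d), summed over $j$) yields the stated $\nu_{H(NA_1)}([x,y:r])=(-1)^{r+\sum_j(x_j+y_j+x_jy_j)}$. Holomorphy as a Jacobi form (the Fourier support condition $2nt-(l,l)=0$) is inherited from part (a) of the earlier Proposition, since each $\vartheta(\tau,z_j)$ is itself a holomorphic — indeed singular — Jacobi form.

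\textbf{Part (b).} Here I would use multiplicativity of weights, indices and characters for products of Jacobi forms (the ring structure in the Remark). Since $\varrho_{6-N,NA_1}=\eta^{12-3N}\cdot\vartheta_{NA_1}$ and $\eta\in J^{(\mathrm{cusp})}_{1/2,L;0}(v_\eta)$ for every positive definite even $L$ (Example \ref{eta function and Gritsenkos theta function}(a)), the weight is $\tfrac{12-3N}{2}+\tfrac N2=6-N$, the index is $0+\tfrac12=\tfrac12$, the $\operatorname{SL}(2,\Z)$-multiplier is $v_\eta^{12-3N}\cdot v_\eta^{3N}=v_\eta^{12}$, and the Heisenberg part is unchanged (as $\eta$ has index $0$ and hence trivial Heisenberg action), giving $\nu_{H(NA_1)}$ as before; note $v_\eta^{12}$ is a genuine character by Proposition \ref{prop:characters of Jaobi group}(a). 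Then $\psi_{12-2N,NA_1}=\varrho_{6-N,NA_1}^2$ has weight $12-2N$, index $1$, $\operatorname{SL}(2,\Z)$-multiplier $v_\eta^{24}=1$, and Heisenberg part $\nu_{H(NA_1)}^2=1$; so the total character is trivial and $\psi_{12-2N,NA_1}\in J_{12-2N,NA_1;1}$. For holomorphy of $\varrho$ and $\psi$ I would invoke the product identity displayed in the Remark (part (e)): a product of holomorphic Jacobi forms is holomorphic, and $\eta$ (with $k=1/2$, $t=0$) and $\vartheta_{NA_1}$ are holomorphic; alternatively, since $\vartheta_{NA_1}$ is singular (Fourier support on $2n-(l,l)=0$) and $\eta^{12-3N}$ depends only on $\tau$ with $q$-order $(12-3N)/24>0$, the product has support in $2nt-(l,l)\ge 0$, as required.

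\textbf{Main obstacle.} The only genuinely delicate point is the precise determination of the Heisenberg character in part (a): one must carefully match the sign $(-1)^{x_j+y_j}e^{-\pi i(x_j^2\tau+2x_jz_j)}$ coming from the $j$th theta factor against the normalization $e^{2\pi i t(\frac12(x,x)\tau+(x,\mathfrak z)+\frac12(x,y)+r)}$ with $t=\tfrac12$ and $(\cdot,\cdot)=2(\cdot,\cdot)_N$ appearing in the slash action (\ref{definition action of Jacobi group}), and check that the phase $e^{-\pi i(x_j^2\tau+2x_jz_j)}$ is exactly cancelled by the index-$\tfrac12$ automorphy factor so that only the sign $(-1)^{\sum_j(x_j+y_j)}$, together with the $e^{\pi i(x,y)/2+\pi i r}=(-1)^{\sum_j x_jy_j+r}$ contribution, survives. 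Everything else is a routine propagation of weights, indices and $v_\eta$-powers through the ring of Jacobi forms.
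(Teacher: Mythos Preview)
Your proposal is correct and follows essentially the same route as the paper: reduce to the one-variable behaviour of $\vartheta$ and $\eta$ from Example \ref{eta function and Gritsenkos theta function}, use the Fourier-expansion argument of Proposition \ref{prop:theta type construction for arbitrary root lattice}(a) for holomorphy, and note that the rescaled form $2(\cdot,\cdot)_N$ on $NA_1$ is what produces the half-integral index and the binary Heisenberg character. Two cosmetic slips to tidy up: Proposition \ref{prop:theta type construction for arbitrary root lattice}(a) as stated requires the form $(\cdot,\cdot)_m$ (and an even lattice therein), so it does not literally yield ``index $1$ with trivial Heisenberg part'' for $NA_1$---you are really rerunning its proof with the rescaled form, exactly as the paper does; and in your ``alternatively'' argument the $q$-order $(12-3N)/24$ is only $\geq 0$ (equality at $N=4$), which is still enough.
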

\begin{proof}
Since by our construction the ambient vector space of $NA_{1}$ is $\R^{N}$ we can derive the holomorphicity of  $\vartheta_{NA_{1}}$ by the Fourier expansion given in Proposition \ref{prop:theta type construction for arbitrary root lattice} part (a).  The Heisenberg part of these functions is a binary character since the bilinear form of $NA_{1}$ is $2(\cdot,\cdot)_{N}$ instead of $(\cdot,\cdot)_{N}$. The last fact is also responsible for the appearance of a half-integral index in this case. The other statements are direct consquences of  Proposition \ref{prop:theta type construction for arbitrary root lattice} Example \ref{eta function and Gritsenkos theta function} part (a) and (d). 
\end{proof}
\noindent The previous considerations show that there exists an operator
\begin{equation}
 \otimes\phi_{0,1}\index{$\otimes_{NA_{1}}\phi_{0,1}$}\,:\,J^{\textit{(weak)}}_{k,NA_{1};1}\to J^{\textit{(weak)}}_{k,(N+1)A_{1};1}\quad,\quad \varphi\mapsto\varphi\otimes\phi_{0,1}
\end{equation}
which extends to Jacobi forms of index zero being isomorphic to the space of elliptic modular forms  $\mathcal{M}_{k}(\sl)$ of weight $k$ by the same construction. 
This enables us to build a tower of theta type Jacobi forms for $4A_{1}$.
\begin{prop}\label{prop:tower for 4A1}
Let $N=1,\dots,4$. We have the following diagram of Jacobi forms
 \begin{center}
 \begin{tikzpicture}[scale=0.5,node distance=2cm,auto]
  \node[] (delta) at (0cm,0cm) {$\textcolor{black}{\Delta_{12}}$};
   \node (12+A1) at (0cm,-2.5cm) {$\textcolor{black}{\varphi_{12,A_{1}}}$};
   \node (12+2A1) at (0cm,-5cm) {$\textcolor{black}{\varphi_{12,2A_{1}}}$};
   \node (12+3A1) at (0cm,-7.5cm) {$\textcolor{black}{\varphi_{12,3A_{1}}}$};
   \node (12+4A1) at (0cm,-10cm) {$\varphi_{12,4A_{1}}$};
   \node[] (10+A1) at (5cm,-2.5cm) {$\textcolor{black}{\psi_{10,A_{1}}}$};
   \node[] (10+2A1) at (5cm,-5cm) {$\textcolor{black}{\varphi_{10,2A_{1}}}$};
   \node[] (10+3A1) at (5cm,-7.5cm) {$\textcolor{black}{\varphi_{10,3A_{1}}}$};
   \node[] (10+4A1) at (5cm,-10cm) {$\varphi_{10,4A_{1}}$};
   \node[] (8+2A1) at (10cm,-5cm) {$\textcolor{black}{\psi_{8,2A_{1}}}$};
   \node[] (8+3A1) at (10cm,-7.5cm) {$\textcolor{black}{\varphi_{8,3A_{1}}}$};
   \node[] (8+4A1) at (10cm,-10cm) {$\varphi_{8,4A_{1}}$};
   \node[] (6+3A1) at (15cm,-7.5cm) {$\textcolor{black}{\psi_{6,3A_{1}}}$};
  \node[] (6+4A1) at (15cm,-10cm) {$\varphi_{6,4A_{1}}$};
  \node[] (4+4A1) at (20cm,-10cm) {$\psi_{4,4A_{1}}$};
  \node[] () at (19.0cm,-8.00cm) {$\left.\frac{\partial^{2}}{\partial z_{4}^{2}}\right|_{z_{4}=0}$};
  \node[] () at (14.0cm,-5.50cm) {$\left.\frac{\partial^{2}}{\partial z_{3}^{2}}\right|_{z_{3}=0}$};
  \node[] () at (9.0cm,-3.00cm) {$\left.\frac{\partial^{2}}{\partial z_{2}^{2}}\right|_{z_{2}=0}$};
  \node[] () at (4.0cm,-0.50cm) {$\left.\frac{\partial^{2}}{\partial z_{1}^{2}}\right|_{z_{1}=0}$};
\draw[<-](delta) to node {} (12+A1);
\draw[<-](12+A1) to node {} (12+2A1);
\draw[<-](12+2A1) to node {} (12+3A1);
\draw[<-](12+3A1) to node {} (12+4A1);
 \draw[<-,dashed](delta) to node {} (10+A1);
\draw[<-](10+A1) to node {} (10+2A1);
\draw[<-](10+2A1) to node {} (10+3A1);
\draw[<-](10+3A1) to node {} (10+4A1);
 \draw[<-,dashed](10+A1) to node {} (8+2A1);
\draw[<-](8+2A1) to node {} (8+3A1);
\draw[<-](8+3A1) to node {} (8+4A1);
 \draw[<-,dashed](8+2A1) to node {} (6+3A1);
 \draw[<-,dashed](6+3A1) to node {} (4+4A1);
\draw[<-](6+3A1) to node {} (6+4A1);
\end{tikzpicture} 
\end{center}
where $\varphi_{k,NA_{1}}\in J_{k,NA_{1};1}$. Except for the last line all forms are cusp forms. The functions at the beginning of each arrow are defined by application of $\otimes\phi_{0,1}$ to the function on the arrowhead. Here the quasi-pullbacks on the diagonal  are taken with respect to the coordinates (\ref{choice of coordinates for NA1}).
\end{prop}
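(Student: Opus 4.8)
The plan is to read the diagram as an assembly of three elementary operations and to check the asserted relation at each arrow together with the (cusp) form property at each node. The diagonal entries $\Delta_{12}=\eta^{24}$ and $\psi_{12-2N,NA_{1}}$ ($N=1,\dots,4$) are already in hand from Example \ref{eta function and Gritsenkos theta function}(c) and Proposition \ref{prop:singular theta series for 4A1}(b); the dashed diagonal arrows are the quasi-pullbacks $\left.\frac{\partial^{2}}{\partial z_{N}^{2}}\right|_{z_{N}=0}$ in the coordinates (\ref{choice of coordinates for NA1}); and the solid arrows apply the operator $\otimes\phi_{0,1}$ of Lemma \ref{lemma:tensor construction for NA1}. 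So I would (i) establish the diagonal relations by an explicit Taylor expansion, (ii) obtain every other node by iterating $\otimes\phi_{0,1}$ on a row head, and (iii) decide holomorphy and cuspidality by one uniform Fourier-coefficient estimate, which also singles out exactly which nodes fail to be cuspidal.

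For (i), recall $\vartheta_{NA_{1}}(\tau,\mathfrak{z})=\prod_{j=1}^{N}\vartheta(\tau,z_{j})$, so $\psi_{12-2N,NA_{1}}=\eta^{24-6N}\prod_{j=1}^{N}\vartheta(\tau,z_{j})^{2}$. Since $\vartheta$ is odd in its elliptic variable, $\vartheta(\tau,0)=0$, hence $\psi_{12-2N,NA_{1}}$ vanishes on $\{z_{N}=0\}$ and its quasi-pullback in $z_{N}$ is the second Taylor coefficient there. Using $\left.\partial_{z}\vartheta(\tau,z)\right|_{z=0}=2\pi i\,\eta^{3}$ (Example \ref{eta function and Gritsenkos theta function}(d), via (\ref{representation of eta to the cube})) one gets $\left.\partial_{z}^{2}\vartheta(\tau,z)^{2}\right|_{z=0}=-8\pi^{2}\eta^{6}$, so
\[
\left.\frac{\partial^{2}}{\partial z_{N}^{2}}\,\psi_{12-2N,NA_{1}}\right|_{z_{N}=0}=-8\pi^{2}\,\eta^{24-6(N-1)}\prod_{j=1}^{N-1}\vartheta(\tau,z_{j})^{2}=-8\pi^{2}\,\psi_{12-2(N-1),(N-1)A_{1}}
\]
with the convention $\psi_{12,0A_{1}}:=\Delta_{12}$; in particular $\Delta_{12}=-\tfrac{1}{8\pi^{2}}\left.\partial_{z_{1}}^{2}\psi_{10,A_{1}}\right|_{z_{1}=0}$. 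This verifies every dashed arrow (up to the scalar permitted in Definition \ref{def:pullbak notation}), exhibits the weight shift $k\mapsto k+2$, and --- because the right-hand side is itself a Jacobi form by Proposition \ref{prop:singular theta series for 4A1} --- dispenses with any separate argument that the quasi-pullback is a Jacobi form.

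For (ii), Lemma \ref{lemma:tensor construction for NA1} with $t=1$ gives $\varphi\mapsto\varphi\otimes\phi_{0,1}\,:\,J^{\textit{(weak)}}_{k,NA_{1};1}\to J^{\textit{(weak)}}_{k,(N+1)A_{1};1}$, and in its index-$0$ version it sends $\Delta_{12}\in\mathcal{M}_{12}(\sl)$ into $J^{\textit{(weak)}}_{12,A_{1};1}$. Iterating from the row heads $\Delta_{12},\psi_{10,A_{1}},\psi_{8,2A_{1}},\psi_{6,3A_{1}}$ then produces all the $\varphi_{k,NA_{1}}$ appearing in the diagram; the only pre-existing identity to reconcile here is $\Delta_{12}\otimes\phi_{0,1}=\Delta_{12}\,\phi_{0,1}=\varphi_{12,A_{1}}$, which is simply the definition of $\phi_{0,1}$ from Example \ref{eta function and Gritsenkos theta function}(c).

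For (iii), I would work entirely with the hyperbolic norm $2n-(l,l)$ of a Fourier index at index $1$ (the quantity governing Definition \ref{def: Jacobi forms}), which is additive under the orthogonal splitting of lattices underlying $\otimes\phi_{0,1}$. Each factor $\vartheta(\tau,z)^{2}=\sum_{n_{1},n_{2}\ \mathrm{odd}}(\cdots)\,q^{(n_{1}^{2}+n_{2}^{2})/8}\,r^{(n_{1}+n_{2})/2}$ has every term of hyperbolic norm $\tfrac{(n_{1}-n_{2})^{2}}{8}\geq 0$; multiplication by $\eta^{a}$ raises the hyperbolic norm by at least $a/12$; $\phi_{0,1}$ contributes at least $-\tfrac12$ (its $r^{\pm1}$ terms); and $\Delta_{12}=\eta^{24}$, of $q$-order $1$, contributes at least $2$ to any product $\Delta_{12}\otimes\cdots$. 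A short tally then gives, for every form attached to a lattice $NA_{1}$ with $N\geq 1$, a hyperbolic-norm bound $2-\tfrac{N}{2}$: strictly positive --- hence a cusp form --- for $N\leq 3$, and equal to $0$ for $N=4$, where it is attained (the term of $\psi_{4,4A_{1}}$ in which all four $\vartheta$-factors supply their $q^{1/4}r$ monomial has hyperbolic norm $0$). Since the apex $\Delta_{12}=\eta^{24}$ is the classical weight-$12$ cusp form, this is exactly the assertion that every form except those on the last line ($N=4$) is a cusp form, the last line consisting of holomorphic Jacobi forms in $J_{k,NA_{1};1}$ that need not be cuspidal. The genuinely delicate point is keeping straight the two normalisations in play --- the paper's $4n-s^{2}\geq -1$ for $\phi_{0,1}$ against the index-$1$ hyperbolic norm $2n-(l,l)=\tfrac12(4n-s^{2})$ --- and observing that it is precisely the $\eta$-powers inside the $\psi$'s (and the weight gap recorded by $\Delta_{12}$) that keep the count above zero away from the degenerate last line; this bookkeeping, rather than any individual verification, is the real obstacle.
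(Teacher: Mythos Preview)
Your argument is correct and follows essentially the same route as the paper's proof: the diagonal quasi-pullbacks via $\left.\partial_{z}\vartheta\right|_{z=0}=2\pi i\,\eta^{3}$ are exactly what the paper invokes from Example~\ref{eta function and Gritsenkos theta function}(d), the solid arrows via $\otimes\phi_{0,1}$ are the paper's use of Lemma~\ref{lemma:tensor construction for NA1} and Proposition~\ref{prop:theta type construction for arbitrary root lattice}, and your hyperbolic-norm bookkeeping (each $\phi_{0,1}$ contributing $\geq -\tfrac12$, each $\eta^{6}$ adding $+\tfrac12$) is precisely the paper's holomorphicity/cuspidality argument. Your uniform bound $2-\tfrac{N}{2}$ is a slightly sharper formulation of the same estimate, not a different method.
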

\begin{proof}
The pullback-structure of the diagram follows from Proposition \ref{prop:singular theta series for 4A1} and Proposition \ref{prop:theta type construction for arbitrary root lattice}. If we rewrite the Fourier expansion of $\phi_{0,1}(\tau,z)$ given in Example \ref{eta function and Gritsenkos theta function}, part (c) according to Definition \ref{def: Jacobi forms}, we obtain that the hyperbolic norm of all indices belonging to non-vanishing Fourier coefficients is bounded from below by $-\frac{1}{2}$. Note that multiplication with $\eta(\tau)^{6}$ adds $\frac{1}{2}$ to this bound. If we take into account that $NA_{1}$ consists of $N$ perpendicular copies of $A_{1}$ we find that this proves the assertion on the holomorphicity and on cusp forms.  Part (d) of the same Example yields the quasi-pullback structure of the diagonal. 
\end{proof}
\noindent We consider another series of root lattices. For $N\geq 3$ the root system $D_{N}$ is described  by
\[\left(\langle\varepsilon_{2}+\varepsilon_{1},\varepsilon_{2}-\varepsilon_{1},\varepsilon_{3}-\varepsilon_{2}\dots,\varepsilon_{N}-\varepsilon_{N-1}\rangle_{\Z},(\cdot,\cdot)_{N}\right)\]
and $D_{2},D_{1}$ can be considered as $\left(\varepsilon_{2}+\varepsilon_{1},\varepsilon_{2}-\varepsilon_{1},(\cdot,\cdot)_{2}\right)$ or $\left(2\varepsilon_{1},(\cdot,\cdot)_{1}\right)$, respectively.  The first of the following two constructions also appeared in \cite{G1} but in different coordinates.
\begin{prop}\label{prop:singular Jacobi forms for DN}
 \begin{enumerate}[(a)]
\item  We have $\vartheta_{D_{N}}\in J_{N/2,D_{N},1}(v_{\eta}^{3N})$ and for $N=1,\dots,8$ there are Jacobi forms $\psi_{12-N,D_{N}}\in J_{12-N,D_{N};1}$ given by
\[\psi_{12-N,D_{N}}(\tau,\mathfrak{z}_{D_{N}})=\eta(\tau)^{24-3N}\vartheta_{D_{N}}(\tau,\mathfrak{z}_{D_{N}})\,.\]
For $N<8$ these functions define Jacobi cusp forms. These forms are obtained by iterative quasi-pullbacks of $\vartheta_{D_{8}}$, namely
\[\left.\frac{\partial \psi_{12-N,D_{N}}}{\partial z_{N}}\right|_{z_{N}=0}=2\pi i\,\psi_{12-N+1,D_{N-1}}\,.\]
\item   We have the following diagram of holomorphic Jacobi forms where all forms except for the first column are cups forms of the weight indicated by the index
\begin{center}
 \begin{tikzpicture}[scale=0.36,node distance=2cm,auto]
  \node (4+D8) at (0cm,0cm) {$\psi_{4,D_{8}}$};
  \node (q5+D8) at (0cm,-5cm) {$\phi_{10,D_{8}(2)}$};
  \node (q6+D8) at (0cm,-10cm) {$\phi_{12,D_{8}(2)}$};
  \node (14+D8) at (0cm,-15cm) {$\phi_{14,D_{8}(2)}$};
  \node (16+D8) at (0cm,-20cm) {$\phi_{16,D_{8}(2)}$};
  \node (18+D8) at (0cm,-25cm) {$\phi_{18,D_{8}(2)}$};
  \node (20+D8) at (0cm,-30cm) {$\phi_{20,D_{8}(2)}$};
  \node (22+D8) at (0cm,-35cm) {$\phi_{22,D_{8}(2)}$};
  \node[] (q5+D7) at (5cm,0cm) {$\psi^{2}_{5,D_{7}}$};
  \node[] (q6+D7) at (5cm,-5cm) {$\phi_{12,D_{7}(2)}$};
  \node[] (14+D7) at (5cm,-10cm) {$\phi_{14,D_{7}(2)}$};
  \node[] (16+D7) at (5cm,-15cm) {$\phi_{16,D_{7}(2)}$};
  \node[] (18+D7) at (5cm,-20cm) {$\phi_{18,D_{7}(2)}$};
  \node[] (20+D7) at (5cm,-25cm) {$\phi_{20,D_{7}(2)}$};
  \node[] (22+D7) at (5cm,-30cm) {$\phi_{22,D_{7}(2)}$};
  \node[] (q6+D6) at (10cm,0cm) {$\psi^{2}_{6,D_{6}}$};
  \node[] (14+D6) at (10cm,-5cm) {$\phi_{14,D_{6}(2)}$};
  \node[] (16+D6) at (10cm,-10cm) {$\phi_{16,D_{6}(2)}$};
  \node[] (18+D6) at (10cm,-15cm) {$\phi_{18,D_{6}(2)}$};
  \node[] (20+D6) at (10cm,-20cm) {$\phi_{20,D_{6}(2)}$};
  \node[] (22+D6) at (10cm,-25cm) {$\phi_{22,D_{6}(2)}$};
  \node[] (q7+D5) at (15cm,0cm) {$\psi^{2}_{7,D_{5}}$};
  \node[] (16+D5) at (15cm,-5cm) {$\phi_{16,D_{5}(2)}$};
  \node[] (18+D5) at (15cm,-10cm) {$\phi_{18,D_{5}(2)}$};
  \node[] (20+D5) at (15cm,-15cm) {$\phi_{20,D_{5}(2)}$};
  \node[] (22+D5) at (15cm,-20cm) {$\phi_{22,D_{5}(2)}$};
  \node[] (q8+D4) at (20cm,-0cm) {$\psi^{2}_{8,D_{4}}$};
  \node[] (18+D4) at (20cm,-5cm) {$\phi_{18,D_{4}(2)}$};
  \node[] (20+D4) at (20cm,-10cm) {$\phi_{20,D_{4}(2)}$};
  \node[] (22+D4) at (20cm,-15cm) {$\phi_{22,D_{4}(2)}$};
  \node[] (q9+D3) at (25cm,0cm) {$\psi^{2}_{9,D_{3}}$};
  \node[] (20+D3) at (25cm,-5cm) {$\phi_{20,D_{3}(2)}$};
  \node[] (22+D3) at (25cm,-10cm) {$\phi_{22,D_{3}(2)}$};
  \node[] (q10+D2) at (30cm,-0cm) {$\psi^{2}_{10,D_{2}}$};
  \node[] (22+D2) at (30cm,-5cm) {$\phi_{22,D_{2}(2)}$};
  \node[] (q11+D1) at (35cm,-0cm) {$\psi^{2}_{11,D_{1}}$};
  \draw[<-] (q11+D1) to node {} (22+D2);
  \draw[<-] (q10+D2) to node {} (20+D3);
  \draw[<-] (22+D2) to node {} (22+D3);
  \draw[<-] (q9+D3) to node {} (18+D4);
 \draw[<-] (20+D3) to node {} (20+D4);
 \draw[<-] (22+D3) to node {} (22+D4);
  \draw[<-] (q8+D4) to node {} (16+D5);
  \draw[<-] (18+D4) to node {} (18+D5);
  \draw[<-] (20+D4) to node {} (20+D5);
  \draw[<-] (22+D4) to node {} (22+D5);
  \draw[<-] (q7+D5) to node {} (14+D6);
  \draw[<-] (16+D5) to node {} (16+D6);
  \draw[<-] (18+D5) to node {} (18+D6);
  \draw[<-] (20+D5) to node {} (20+D6);
  \draw[<-] (22+D5) to node {} (22+D6);
  \draw[<-] (q6+D6) to node {} (q6+D7);
  \draw[<-] (14+D6) to node {} (14+D7);
  \draw[<-] (16+D6) to node {} (16+D7);
  \draw[<-] (18+D6) to node {} (18+D7);
  \draw[<-] (20+D6) to node {} (20+D7);
  \draw[<-] (22+D6) to node {} (22+D7);
  \draw[<-] (q5+D7) to node {} (q5+D8);
  \draw[<-] (q6+D7) to node {} (q6+D8);
  \draw[<-] (14+D7) to node {} (14+D8);
  \draw[<-] (16+D7) to node {} (16+D8);
  \draw[<-] (18+D7) to node {} (18+D8);
  \draw[<-] (20+D7) to node {} (20+D8);
  \draw[<-] (22+D7) to node {} (22+D8);
\end{tikzpicture} 
\end{center}
where the functions at the beginning of each arrow are defined by application of $\prescript{}{D_{N-1}(2)}{\prod}^{D_{N}(2)}(\cdot,\phi_{0,1})$ to the function on the arrowhead.
 More precisely we have 
\[\phi_{k,D_{N}(2)}\in J_{k,D_{N}(2),1}\]
and $\phi_{k,D_{N}(2)}$ is a cusp form if and only if $N<8$. All the functions except for the first line have the property
\[\prescript{}{D_{N-1}(2)}{\prod}^{D_{N}(2)}(\phi_{k,D_{N-1}(2)},\phi_{0,1})\to\phi_{k,D_{N-1}(2)}\text{  where }N\geq 2\]
regarding the convention $\phi_{26-2N,D_{N-1}(2)}:=\psi^{2}_{13-N,D_{N-1}}$
 \end{enumerate}
\end{prop}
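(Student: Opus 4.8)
\noindent The plan is to deduce everything from the transformation formulae for $\eta$, $\vartheta$ and $\phi_{0,1}$ recalled in Example~\ref{eta function and Gritsenkos theta function}, from Proposition~\ref{prop:theta type construction for arbitrary root lattice}, and from the bookkeeping of hyperbolic norms of Fourier coefficients already used in the proof of Proposition~\ref{prop:tower for 4A1}; no new idea is required, only care with the rescaling $D_N\rightsquigarrow D_N(2)$.

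For part (a) I would apply Proposition~\ref{prop:theta type construction for arbitrary root lattice}(a) with $L=D_N\subseteq\R^N$. As $D_N$ is an even sublattice of $\Z^N$, that proof shows $\vartheta_{D_N}$ transforms like a Jacobi form of weight $N/2$ and index $1$ with trivial Heisenberg character, and its $\operatorname{SL}(2,\Z)$-multiplier is $v_\eta^{3N}$ since $\vartheta$ carries $v_\eta^{3}$ (Example~\ref{eta function and Gritsenkos theta function}(d)) and $\vartheta_{D_N}$ is an $N$-fold product; the Fourier expansion there has every index on $2n-(l,l)=0$, so $\vartheta_{D_N}\in J_{N/2,D_N;1}(v_\eta^{3N})$ is a singular form. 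Because $N\le 8$ the exponent $24-3N$ is a non-negative integer and $\eta^{24-3N}\in J_{(24-3N)/2,D_N;0}(v_\eta^{24-3N})$ by Example~\ref{eta function and Gritsenkos theta function}(a); multiplying gives weight $12-N$, index $1$ and character $v_\eta^{24-3N}\cdot v_\eta^{3N}=v_\eta^{24}=1$, while the factor $\eta^{24-3N}$ moves the bound to $2n-(l,l)\ge\frac{24-3N}{12}$, which is $\ge 0$ for $N\le 8$ and $>0$ for $N<8$. Hence $\psi_{12-N,D_N}\in J_{12-N,D_N;1}$ is holomorphic, and a cusp form for $N<8$ (for $N=8$ it is the singular form $\vartheta_{D_8}$). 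The quasi-pullback identity is then a one-line computation: $D_{N-1}=D_N\cap\langle\varepsilon_1,\dots,\varepsilon_{N-1}\rangle_\R$ by the description of the root system, and since $\vartheta(\tau,0)=0$ while $\left.\partial_z\vartheta(\tau,z)\right|_{z=0}=2\pi i\,\eta^{3}$ (Example~\ref{eta function and Gritsenkos theta function}(d)), the product rule gives $\left.\frac{\partial}{\partial z_N}\psi_{12-N,D_N}\right|_{z_N=0}=2\pi i\,\eta^{24-3(N-1)}\vartheta_{D_{N-1}}=2\pi i\,\psi_{13-N,D_{N-1}}$; iterating from $\psi_{4,D_8}=\vartheta_{D_8}$ produces the column.

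For part (b) I would first obtain the seed row by squaring: $\psi^2_{12-N,D_N}=\eta^{48-6N}\vartheta_{D_N}^{2}$ has weight $24-2N$ and index $2$ for $D_N$, i.e.\ index $1$ for $D_N(2)$; its character is trivial because $v_\eta^{48-6N}\cdot v_\eta^{6N}=v_\eta^{48}=1$ and the Heisenberg part of any index-$1$ form for the even lattice $D_N(2)$ is automatically trivial, and it is a cusp form for $N<8$ and a holomorphic non-cusp form for $N=8$ (the corner $\psi_{4,D_8}=\vartheta_{D_8}$ being the unsquared singular form of part (a)). Since $D_1\subset\dots\subset D_8$ satisfies $D_m=D_{m+1}\cap\langle\varepsilon_1,\dots,\varepsilon_m\rangle_\R$, each step $\prescript{}{D_m(2)}{\prod}^{D_{m+1}(2)}(\,\cdot\,,\phi_{0,1})$ is admissible in the sense of Proposition~\ref{prop:theta type construction for arbitrary root lattice}(b), and unwinding the recursion the entry over $D_N(2)$ is
\[\phi_{k,D_N(2)}(\tau,\mathfrak{z})=\eta(\tau)^{48-6m}\prod_{j=1}^{m}\vartheta^{2}(\tau,\mathfrak{z}_j)\prod_{j=m+1}^{N}\phi_{0,1}(\tau,\mathfrak{z}_j),\qquad m=12-\tfrac{k}{2},\]
in the standard coordinates $\mathfrak{z}_j=(\mathfrak{z},\varepsilon_j)$; reading off the automorphy factors of the factors shows at once that this is a holomorphic Jacobi form of weight $k$, index $1$ for $D_N(2)$ and trivial character. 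On the Fourier side $\vartheta^{2}(\tau,\mathfrak{z}_j)$ contributes hyperbolic norm $\ge 0$ to every index, $\eta^{48-6m}$ contributes $\frac{48-6m}{12}=4-\frac{m}{2}$, and each $\phi_{0,1}$-factor contributes at least $-\frac12$ — this last bound being precisely the rewriting of the Fourier expansion of $\phi_{0,1}$ from Example~\ref{eta function and Gritsenkos theta function}(c) used in the proof of Proposition~\ref{prop:tower for 4A1} — so every non-zero Fourier coefficient of $\phi_{k,D_N(2)}$ lies on $2n-(l,l)\ge(4-\tfrac m2)+0-\tfrac{N-m}{2}=4-\tfrac N2$. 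Thus $\phi_{k,D_N(2)}$ is holomorphic for $N\le 8$ and a cusp form for $N<8$, while for $N=8$ the bound $4-\tfrac N2=0$ is attained (take the $r^{\pm1}$-term of every $\phi_{0,1}$, the leading term of $\eta$ and coinciding exponents in the $\vartheta^{2}$-factors), so it is not a cusp form. Finally $\prescript{}{D_{N-1}(2)}{\prod}^{D_N(2)}(\phi_{k,D_{N-1}(2)},\phi_{0,1})\to\phi_{k,D_{N-1}(2)}$ since setting $\mathfrak{z}_N=0$ only replaces the last factor $\phi_{0,1}(\tau,\mathfrak{z}_N)$ by $\phi_{0,1}(\tau,0)=12$, exactly as in the proof of Proposition~\ref{prop:theta type construction for arbitrary root lattice}(b), and the convention $\phi_{26-2N,D_{N-1}(2)}:=\psi^2_{13-N,D_{N-1}}$ is just the identification of the diagonal with the seed row.

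The only genuinely delicate point in this programme is keeping the rescaling conventions straight — $D_N$ versus $D_N(2)$, index $1$ versus index $2$, the resulting half-integral versus integral weights and indices, and the cancellation of the $\eta$-multipliers into $v_\eta^{24}=1$ so that the character stays trivial — which is bookkeeping rather than mathematics; there is no conceptual obstacle beyond what already appears in the proofs of Propositions~\ref{prop:theta type construction for arbitrary root lattice} and~\ref{prop:tower for 4A1}.
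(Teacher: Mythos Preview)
Your proof is correct and follows essentially the same route as the paper's own argument: both reduce part~(a) to Proposition~\ref{prop:theta type construction for arbitrary root lattice}(a) using that $D_N\subset\R^N$ is full, identify the character as $v_\eta^{3N}$ from the $N$-fold product of $\vartheta$'s, and deduce cuspidality for $N<8$ from the positive $q$-shift coming from $\eta^{24-3N}$; for part~(b) both obtain the hyperbolic-norm bound $2n-(l,l)\ge\frac{8-N}{2}$ by summing the contributions of the $\eta$-, $\vartheta^2$- and $\phi_{0,1}$-factors and invoke Proposition~\ref{prop:theta type construction for arbitrary root lattice}(b) for the pullback structure. Your version is in fact more explicit than the paper's --- you write down the closed formula $\phi_{k,D_N(2)}=\eta^{48-6m}\prod_{j\le m}\vartheta^2\prod_{j>m}\phi_{0,1}$ with $m=12-k/2$ and verify that the bound is attained at $N=8$ --- but the underlying strategy is identical.
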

\noindent The arithmetic Lifting $\operatorname{A-Lift}(\psi_{11,D_{1}})$ is the cusp form of weight 11 for the Siegel paramodular group of level 2.
\begin{proof}
 \begin{enumerate}[(a)]
  \item  We first note that our model for $D_{N}$ is compatible with the requirements of Proposition \ref{prop:theta type construction for arbitrary root lattice}. Hence the statements on the modular behaviour and the identification of the character follow directly as well as the property under quasi-pullbacks. It remains to investigate the holomorphicity. Since $D_{N}\subseteq \R^{N}$ is a full lattice  we conclude from part (a) of Proposition \ref{prop:theta type construction for arbitrary root lattice} that $\vartheta_{D_{N}}$ is indeed holomorphic at infinity. Moreover for $N<8$ the forms $\psi_{12-N,D_{N}}$ are cuspidal because they equal the product of a holomorphic Jacobi form of singular weight and a cusp form of index 0. 
\item From part (a) and the iterative construction we can easily derive that $\phi_{k,D_{N}(2)}$ is a holomorphic Jacobi form whose Fourier coefficients $f(n,l)$ are parametrized by all pairs $(n,l)\in \N\times D_{N}(2)^{\vee}$ such that $2n-(l,l)\geq \frac{8-N}{2}$. Now Proposition \ref{prop:theta type construction for arbitrary root lattice} completes the proof.
 \end{enumerate}
\end{proof}
\noindent We finish this section considering root lattices of type $A_{N}$. Here we use the model
\[\left(\langle\varepsilon_{2}-\varepsilon_{1},\dots,\varepsilon_{N+1}-\varepsilon_{N}\rangle_{\Z},(\cdot,\cdot)_{N+1}\right)\]
for $A_{N}$ where $N\geq 1$. Note that in this case $A_{N}\subseteq \R^{N+1}$. Hence $A_{N}$ is not a full lattice according to our realization.
\begin{prop}
\begin{enumerate}[(a)]
 \item Let $N\in\{2,\dots 7\}$. We define 
\[\vartheta_{A_{N}}^{(1)}\,:\,\mathbb{H}\times(A_{N}\otimes\C)\to\C\quad,\quad \vartheta_{A_{N}}^{(1)}(\tau,\mathfrak{z}):=\eta(\tau)^{-1} \vartheta_{A_{N}}(\tau,\mathfrak{z})\,.\]
Then $\vartheta_{A_{N}}^{(1)}\in J_{N/2,A_{N};1}(v_{\eta}^{3N+2})$ and there are Jacobi forms $\psi_{11-N,A_{N}}\in J_{11-N,A_{N};1}$ given by
\[\psi_{11-N,A_{N}}(\tau,\mathfrak{z}_{A_{N}})=\eta(\tau)^{21-3N}\vartheta_{A_{N}}(\tau,\mathfrak{z}_{A_{N}})\,.\]
For $N<7$ these functions define Jacobi cusp forms. These forms are obtained by iterative quasi-pullbacks of $\psi_{A_{7}}$, namely
\[\left.\frac{\partial \psi_{11-N,A_{N}}}{\partial z_{N}}\right|_{z_{N}=0}=2\pi i\,\psi_{11-N+1,A_{N-1}}\,.\]
\item   We have the following diagram of holomorphic Jacobi forms where all forms except for the first column are cups forms of the weight indicated by the index
\begin{center}
 \begin{tikzpicture}[scale=0.36,node distance=2cm,auto]
  \node (4+D8) at (0cm,0cm) {$\psi_{4,A_{7}}$};
  \node (q5+D8) at (0cm,-5cm) {$\phi_{10,A_{7}(2)}$};
  \node (q6+D8) at (0cm,-10cm) {$\phi_{12,A_{7}(2)}$};
  \node (14+D8) at (0cm,-15cm) {$\phi_{14,A_{7}(2)}$};
  \node (16+D8) at (0cm,-20cm) {$\phi_{16,A_{7}(2)}$};
  \node (18+D8) at (0cm,-25cm) {$\phi_{18,A_{7}(2)}$};
  \node[] (q5+D7) at (5cm,0cm) {$\psi^{2}_{5,A_{6}}$};
  \node[] (q6+D7) at (5cm,-5cm) {$\phi_{12,A_{6}(2)}$};
  \node[] (14+D7) at (5cm,-10cm) {$\phi_{14,A_{6}(2)}$};
  \node[] (16+D7) at (5cm,-15cm) {$\phi_{16,A_{6}(2)}$};
  \node[] (18+D7) at (5cm,-20cm) {$\phi_{18,A_{6}(2)}$};
  \node[] (q6+D6) at (10cm,0cm) {$\psi^{2}_{6,A_{5}}$};
  \node[] (14+D6) at (10cm,-5cm) {$\phi_{14,A_{5}(2)}$};
  \node[] (16+D6) at (10cm,-10cm) {$\phi_{16,A_{5}(2)}$};
  \node[] (18+D6) at (10cm,-15cm) {$\phi_{18,A_{5}(2)}$};
  \node[] (q7+D5) at (15cm,0cm) {$\psi^{2}_{7,A_{4}}$};
  \node[] (16+D5) at (15cm,-5cm) {$\phi_{16,A_{4}(2)}$};
  \node[] (18+D5) at (15cm,-10cm) {$\phi_{18,A_{4}(2)}$};
  \node[] (q8+D4) at (20cm,-0cm) {$\psi^{2}_{8,A_{3}}$};
  \node[] (18+D4) at (20cm,-5cm) {$\phi_{18,A_{3}(2)}$};
  \node[] (q9+D3) at (25cm,0cm) {$\psi^{2}_{9,A_{2}}$};
  \draw[<-] (q9+D3) to node {} (18+D4);
  \draw[<-] (q8+D4) to node {} (16+D5);
  \draw[<-] (18+D4) to node {} (18+D5);
  \draw[<-] (q7+D5) to node {} (14+D6);
  \draw[<-] (16+D5) to node {} (16+D6);
  \draw[<-] (18+D5) to node {} (18+D6);
  \draw[<-] (q6+D6) to node {} (q6+D7);
  \draw[<-] (14+D6) to node {} (14+D7);
  \draw[<-] (16+D6) to node {} (16+D7);
  \draw[<-] (18+D6) to node {} (18+D7);
  \draw[<-] (q5+D7) to node {} (q5+D8);
  \draw[<-] (q6+D7) to node {} (q6+D8);
  \draw[<-] (14+D7) to node {} (14+D8);
  \draw[<-] (16+D7) to node {} (16+D8);
  \draw[<-] (18+D7) to node {} (18+D8);
\end{tikzpicture} 
\end{center}
where the functions at the beginning of each arrow are defined by application of $\prescript{}{A_{N-1}(2)}{\prod}^{A_{N}(2)}(\cdot,\phi_{0,1})$ to the function on the arrowhead.
 More precisely we have 
\[\phi_{k,A_{N}(2)}\in J_{k,A_{N}(2),1}\]
and $\phi_{k,A_{N}(2)}$ is a cusp form if and only if $N<7$. All the functions except for the first line have the property
\[\prescript{}{A_{N-1}(2)}{\prod}^{A_{N}(2)}(\phi_{k,A_{N-1}(2)},\phi_{0,1})\to\phi_{k,A_{N-1}(2)}\text{  where }N\geq 3\]
regarding the convention $\phi_{24-2N,A_{N-1}(2)}:=\psi^{2}_{12-N,A_{N-1}}$
\end{enumerate}
\end{prop}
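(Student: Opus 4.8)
The assertion is the $A_N$-counterpart of Proposition~\ref{prop:singular Jacobi forms for DN}, and I would run the proof along the very same lines; the one genuinely new point is that in the chosen model $A_N=\langle\varepsilon_2-\varepsilon_1,\dots,\varepsilon_{N+1}-\varepsilon_N\rangle$ sits inside $\R^{N+1}$ as a \emph{non-full} lattice, so the Fourier expansion delivered by Proposition~\ref{prop:theta type construction for arbitrary root lattice}(a) must be re-read after orthogonally projecting the indices onto $A_N\otimes\R=(1,\dots,1)^{\perp}$. For the modular and character part of (a) there is nothing to do beyond quoting Proposition~\ref{prop:theta type construction for arbitrary root lattice}(a) with $m=N+1$: it gives that $\vartheta_{A_N}$ transforms like a Jacobi form of index~$1$ and weight $(N+1)/2$ for $A_N$ with trivial Heisenberg part and $\sl$-multiplier the $(N+1)$-st power $v_\eta^{3(N+1)}$ of the multiplier $v_\eta^{3}$ of $\vartheta$ (Example~\ref{eta function and Gritsenkos theta function}(d)). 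Multiplying by $\eta(\tau)^{-1}$, which has weight $-1/2$ and $\sl$-multiplier $v_\eta^{-1}$, then shows that $\vartheta_{A_N}^{(1)}$ transforms like a Jacobi form of weight $N/2$, index~$1$ and $\sl$-character $v_\eta^{3N+2}$ with trivial Heisenberg part, and $\psi_{11-N,A_N}=\eta^{21-3N}\vartheta_{A_N}$ has weight $\tfrac{21-3N}{2}+\tfrac{N+1}{2}=11-N$.

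The real content of (a) is holomorphicity. I would write out a Fourier monomial of $\vartheta_{A_N}$ coming from odd integers $n_1,\dots,n_{N+1}$: it carries $q^{\frac18\sum n_j^2}$ and sits at the index obtained by projecting $\tfrac12(n_1,\dots,n_{N+1})$ onto $A_N\otimes\R$; a one-line computation shows its hyperbolic norm equals $\tfrac{(n_1+\cdots+n_{N+1})^2}{4(N+1)}\ge 0$, so $\vartheta_{A_N}$ is a holomorphic Jacobi form, and — using that the sign-patterns weighted by $\left(\tfrac{-4}{n_j}\right)$ cancel away the terms that would otherwise spoil things — $\vartheta_{A_N}^{(1)}$ stays holomorphic; being of the singular weight $N/2$ it is then automatically singular, so $\vartheta_{A_N}^{(1)}\in J_{N/2,A_N;1}(v_\eta^{3N+2})$. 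For $N<7$ the factor $\eta^{21-3N}$ is an elliptic cusp form of index~$0$ (leading exponent $\tfrac{21-3N}{24}>0$), hence $\psi_{11-N,A_N}$, being a holomorphic Jacobi form times such a cusp form, is a Jacobi cusp form; for $N=7$ one has $\psi_{4,A_7}=\vartheta_{A_7}$, which is holomorphic but carries isotropic Fourier indices and so is not cuspidal. Finally the quasi-pullback identity is a computation with $\left.\partial_z\vartheta(\tau,z)\right|_{z=0}=2\pi i\,\eta(\tau)^3$ (Example~\ref{eta function and Gritsenkos theta function}(d)): in the coordinates of (a), putting $z_N=0$ kills one $\vartheta$-factor of $\vartheta_{A_N}$, differentiating in $z_N$ replaces it by $2\pi i\,\eta^3$, the surviving factors reassemble into $\vartheta_{A_{N-1}}$, and $21-3N+3=21-3(N-1)$, so $\left.\partial_{z_N}\psi_{11-N,A_N}\right|_{z_N=0}=2\pi i\,\psi_{12-N,A_{N-1}}$.

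Part~(b) is the verbatim analogue of Proposition~\ref{prop:singular Jacobi forms for DN}(b). Starting from $\psi^2_{12-N,A_{N-1}}$ (with the stated reading $\phi_{24-2N,A_{N-1}(2)}:=\psi^2_{12-N,A_{N-1}}$) one constructs each column of the diagram from the previous one by the operator $\prescript{}{A_{N-1}(2)}{\prod}^{A_N(2)}(\,\cdot\,,\phi_{0,1})$ of Proposition~\ref{prop:theta type construction for arbitrary root lattice}(b), whose output is an index-$1$ Jacobi form for $A_N(2)$ with trivial Heisenberg part and which pulls back to its input because $\phi_{0,1}(\tau,0)=12\neq0$ and $(\mathfrak{z}_{A_{N-1}},\varepsilon_\kappa)=0$. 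Holomorphicity of $\phi_{k,A_N(2)}$ follows by induction from Proposition~\ref{prop:theta type construction for arbitrary root lattice}; tracking the lower bound for the hyperbolic norm of the Fourier indices — it is strictly positive at the top of the tower and is lowered by the fixed amount $\tfrac12$ each time one multiplies by $\phi_{0,1}$ — yields that the Fourier support of $\phi_{k,A_N(2)}$ lies in $\{2n-(l,l)\ge\tfrac{7-N}{2}\}$, which is strictly positive exactly for $N<7$; hence $\phi_{k,A_N(2)}$ is a cusp form iff $N<7$ and is otherwise merely holomorphic, and the pullback relations at the end of (b) are read off directly from the construction.

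The step I expect to be the main obstacle is exactly the holomorphicity and cuspidality bookkeeping in this non-full-lattice setting: one must be careful about how the indices of a product of $\vartheta$'s project into $A_N^{\vee}$, and — unlike the $D_N$-case, where fullness of $D_N\subseteq\R^N$ made the Fourier expansion of $\vartheta_{D_N}$ immediately isotropic and the analogous point was essentially free — one has to verify that the cancellations coming from the Kronecker symbols $\left(\tfrac{-4}{n}\right)$ are precisely what keep $\eta^{-1}\vartheta_{A_N}$, and then the whole $A_N(2)$-tower, inside the holomorphic range with the stated hyperbolic-norm bounds.
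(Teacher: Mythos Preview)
Your overall architecture—quoting Proposition~\ref{prop:theta type construction for arbitrary root lattice} for the transformation law and character, the quasi-pullback via $\partial_z\vartheta|_{z=0}=2\pi i\,\eta^3$, and the verbatim transfer of the $D_N(2)$-tower argument to $A_N(2)$ in part~(b)—matches the paper. The divergence, and the gap, is entirely in the holomorphicity step for $\vartheta_{A_N}^{(1)}$ in part~(a).

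Your hyperbolic-norm formula $\tfrac{(n_1+\cdots+n_{N+1})^2}{4(N+1)}$ for a monomial of $\vartheta_{A_N}$ is correct, but it only yields $\geq 0$; to absorb the factor $\eta^{-1}=q^{-1/24}(1+\cdots)$ you need the sharper bound $\geq\tfrac{1}{12}$ on every surviving Fourier coefficient. For $N=2$ this is free, since three odd integers have odd sum and hence $|\sum n_j|\geq 1$. For $N\geq 3$ it is not: already at $N=3$ the tuple $(1,1,-1,-1)$ has $\sum n_j=0$, its sign $\prod_j\bigl(\tfrac{-4}{n_j}\bigr)=+1$, and there is no other odd $4$-tuple with the same projection to $A_3\otimes\R$ and the same $\sum n_j^2$, so nothing cancels it. Your sentence ``the sign-patterns \ldots\ cancel away the terms that would otherwise spoil things'' is thus unsupported, and the follow-up ``being of the singular weight $N/2$ it is then automatically singular'' is circular—singular weight constrains the Fourier support only \emph{after} holomorphicity is established, not before.

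The paper does not attempt a uniform Fourier estimate. It carries out the explicit computation only for the base case $N=2$ (where the parity argument gives exactly the needed $\tfrac{1}{12}$), and for $N>2$ it argues inductively: writing $\mathfrak{z}=\mathfrak{z}_{A_{N-1}}\oplus\mathfrak{z}_K$ with $K=(A_{N-1})^{\perp}_{A_N}$ a rank-one lattice realized in $\R$, one has a product decomposition $\vartheta_{A_N}^{(1)}(\tau,\mathfrak{z})=\vartheta_{A_{N-1}}^{(1)}(\tau,\mathfrak{z}_{A_{N-1}})\cdot\vartheta_K(\tau,\mathfrak{z}_K)$, and holomorphicity then follows from the inductive hypothesis together with the holomorphicity of the rank-one factor $\vartheta_K$. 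This reduction to $A_2$ is the idea your proof is missing; a direct bound of the type you compute genuinely degrades as $N$ grows and cannot by itself deliver the required $\tfrac{1}{12}$.
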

\begin{proof}
\begin{enumerate}[(a)]
 \item  We start with the treatment of $\vartheta_{A_{N}}^{(1)}$. As before the transformation behaviour under modular substitutions and the shape of the character follow directly from Proposition \ref{prop:theta type construction for arbitrary root lattice}. The only thing which remains to show is the holomorphicity. In this case the proof of this fact is less obvious than in the previous cases since the dimension of the ambient vector space of $A_{N}$ is strictly greater than $N$.  We start with the Fourier expansion of the function $\vartheta_{A_{2}}(\tau,\mathfrak{z}_{A_{2}})$. Each summand is of	 the shape
\[q^{\frac{n_{1}^{2}+n_{2}^{2}+n_{3}^{2}}{8}}\exp(\pi i(-n_{1}z_{1}+n_{2}z_{2}+n_{3}(z_{1}-z_{2})))\]
where $n_{1}\equiv n_{2} \equiv n_{3}\equiv 1 \bmod 2 $\,. Since $A_{2}^{\vee}\subseteq \frac{1}{6} A_{2}$ the last term can be rewritten as
\[q^{\frac{n_{1}^{2}+n_{2}^{2}+n_{3}^{2}}{8}}\exp(2\pi i(v,\mathfrak{z}_{A_{2}}))\]
where
\[v:=\frac{1}{6}\left[n_{1}(2\varepsilon_{1}-\varepsilon_{2}-\varepsilon_{3})+n_{2}(2\varepsilon_{3}-\varepsilon_{1}-\varepsilon_{2})+n_{3}(2\varepsilon_{2}-\varepsilon_{1}-\varepsilon_{3})\right]\,.\]
The hyperbolic norm of the last expression equals
\[\begin{aligned}
   &2\frac{n_{1}^{2}+n_{2}^{2}+n_{3}^{2}}{8}-(v,v)&\\
&\quad\quad\quad\quad\quad\quad=\frac{n_{1}^{2}+n_{2}^{2}+n_{3}^{2}}{4}-\frac{n_{1}^{2}+n_{2}^{2}+n_{3}^{2}}{6}+\frac{n_{1}n_{2}+n_{1}n_{3}+n_{2}n_{3}}{6}&\\
  &\quad\quad\quad\quad\quad\quad=\frac{(n_{1}+n_{2}+n_{3})^{2}}{12}\geq	 \frac{1}{12}&
  \end{aligned}\] 
since the parity of $n_{1},n_{2},n_{3}$ is odd. This shows that $\vartheta_{A_{2}}^{(1)}$ is indeed a holomorphic Jacobi form since
\[\eta(\tau)^{-1}=q^{-1/24}(1+q(\dots))\,.\]
Now let $N>2$. We decompose $\mathfrak{z}\in A_{N}\otimes\C$ as 
\[\mathfrak{z}=\mathfrak{z}_{A_{N-1}}\oplus \mathfrak{z}_{K}\quad,\quad K=(A_{N-1})^{\perp}_{A_{N}}\,.\]
With respect to this basis $\vartheta_{A_{N}}^{(1)}$ decomposes as
\[\vartheta_{A_{N}}^{(1)}(\tau,\mathfrak{z})=\vartheta_{A_{N-1}}^{(1)}(\tau,\mathfrak{z}_{A_{N-1}})\cdot \vartheta_{K}(\tau,\mathfrak{z}_{K})\]
where $K$ is realized as a lattice in $\R$. Now an induction and the holomorphicity of $\vartheta_{K}$  show that $\vartheta_{A_{N}}^{(1)}$ is a holomorphic Jacobi form. The statements on $\psi_{11-N,A_{N}}$ are obvious.
\item The proof of this assertion is completely analogue to the proof of Proposition \ref{prop:singular Jacobi forms for DN} part (b).
\end{enumerate}
\end{proof}
\noindent The length of the towers can be arbitrarily increased if we multiply them by powers of $\Delta_{12}$ to preserve the holomorphicity.
\newpage
\section{Additional symmetries for the lattice $D_{4}$}
\noindent In this section we use theta type Jacobi forms to explain the existence of a particular modular form for the lattice $D_{4}$. For any $r\in L_{2}$ we define the reflection at the hyperplane $r^{\perp}$ as
\[\sigma_{r}\,:\,V\to V\quad,\quad \sigma_{r}(v)=v-2\frac{(r,v)}{(r,r)}r\]
 Let $r\in L_{2}\otimes\Q$ such that $(r,r)<0$. The rational quadratic divisor with respect to $r$ is given as
 \[\mathcal{D}_{r}(L_{2}):=\{[\mathcal{Z}]\in \mathcal{D}(L_{2})\,|\,(\mathcal{Z},r)=0\}\,.\]
 In particular
 \[[\mathcal{Z}]\in\mathcal{D}_{r}(L_{2})\iff \sigma_{r}([\mathcal{Z}])=[\mathcal{Z}]\,.\]
 Note that \(\sigma_{r}\in\operatorname{O}(L_{2}\otimes\Q)^{+}\iff (r,r)<0\,.\)
We now state a variant of Borcherds multiplicative lifting which can be found in \cite{G2}.
\begin{thm}[Borcherds, Gritsenko]\label{theorem:borcherds lifting for weak jacob forms}
 Let $\varphi$ be a weakly holomorphic Jacobi form with Fourier coefficients $f(n,l)$.
 Assume that $f(n,l)\in \Z$ if $2n-(l,l)\leq 0$\,. Then there exists a modular form $F_{\varphi}$ of weight $\frac{1}{2}f(0,0)$ 
 with respect to $\widetilde{\operatorname{O}}(L_{2})^{+}$ satisfying
 \[\begin{aligned}
    &&&\operatorname{div}(F_{\varphi})&&=&&\sum_{\substack{h\in L_{2}^{\vee}\text{ primitive}\\(h,h)<0}}{\alpha_{h}\mathcal{D}_{h}(L_{2})}&\\
    &\text{where }&&\qquad\alpha_{h}&&=&&\sum_{\substack{d>0\\(h,h)=2n-(l,l)\\h\equiv l \bmod L_{2}}}{f(d^{2}n,dl)}\,.&
   \end{aligned}\]
\end{thm}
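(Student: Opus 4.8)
The plan is to realize $F_{\varphi}$ as the Borcherds lift of $\varphi$ through the singular theta correspondence. Since $L_{2}$ has signature $(2,2+N)$ and $\varphi$ is understood to have weight $0$ (the only case in which the output has weight $\tfrac12 f(0,0)$), I would first apply the theta decomposition to $\varphi$: choosing coordinates adapted to the embedding $L(-1)\subseteq V$ and expanding in the Jacobi theta series $\vartheta_{L,\mu}$ of the rank-$N$ lattice $L$, one writes $\varphi(\tau,\mathfrak z)=\sum_{\mu\in D(L)}h_{\mu}(\tau)\,\vartheta_{L,\mu}(\tau,\mathfrak z)$, and the vector $h=(h_{\mu})_{\mu}$ is a \emph{weakly holomorphic} vector-valued modular form of weight $-N/2$ for the Weil representation $\rho_{L_{2}}$ of $\Mp$ attached to $D(L_{2})$, compare \cite{Br}. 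Its Fourier coefficients $c(\gamma,m)$ are exactly the numbers $f(n,l)$ re-indexed via $m=n-\tfrac12(l,l)$ and $\gamma\equiv l\bmod L_{2}$; the hypothesis $f(n,l)\in\Z$ whenever $2n-(l,l)\le 0$ says precisely that the principal part of $h$ is integral.

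Next I would feed $h$ into Borcherds' singular theta lift. Writing $\tau=u+iv$, consider the regularized integral
\[\Phi(Z)\;=\;\int_{\mathcal F}^{\mathrm{reg}}\big\langle h(\tau),\,\overline{\Theta_{L_{2}}(\tau,Z)}\big\rangle\,\frac{du\,dv}{v^{2}}\,,\]
where $\Theta_{L_{2}}$ is the Siegel theta function of $L_{2}$ evaluated at $[Z]\in\mathcal D$ and the regularization is that of Harvey--Moore and Borcherds. The properties to establish are: (i) $\Phi$ is $\widetilde{\operatorname{O}}(L_{2})^{+}$-invariant, because $\Theta_{L_{2}}$ and the pairing are; (ii) on the complement of the locally finite union $\bigcup_{h}\mathcal D_{h}(L_{2})$, taken over primitive $h\in L_{2}^{\vee}$ with $(h,h)<0$, the regularized integral converges and is real-analytic; (iii) across each $\mathcal D_{h}(L_{2})$ the function $\Phi$ acquires a logarithmic singularity whose strength is the sum of the $c(\gamma,m)$ over the positive integer multiples of $h$, i.e.\ the quantity $\alpha_{h}$ of the statement; and (iv) $\Phi$ satisfies a $dd^{c}$-equation identifying it, locally away from the divisors, with $-4\log|g|$ for a holomorphic $g$.

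From $\Phi$ I would then extract $F_{\varphi}$. Fixing a zero-dimensional cusp, represented by a primitive isotropic $c\in L_{2}$, and unfolding the integral there produces the Fourier expansion of $\Phi$ in Borcherds' product shape, $\Phi(Z)=\mathrm{const}-\big(\log|Y|^{2}\big)f(0,0)-4\log\big|q^{A}\prod(1-q^{n}\zeta^{\ell}\xi^{m})^{f(\cdot,\cdot)}\big|$, where $Y=\Im Z$ and $A$ is the Weyl vector attached to this cusp. This identifies, near the cusp, $F_{\varphi}(Z)=q^{A}\prod(1-q^{n}\zeta^{\ell}\xi^{m})^{f(\cdot,\cdot)}$; the exponents are integers by the integrality of the principal part of $h$, so $F_{\varphi}$ is a well-defined holomorphic function near the cusp (with at most a finite abelian character), and the $\log|Y|^{2}$-term reads off the weight $\tfrac12 f(0,0)$. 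Modularity under $\widetilde{\operatorname{O}}(L_{2})^{+}$ follows from (i): a $\widetilde{\operatorname{O}}(L_{2})^{+}$-invariant function that is locally $-4\log$ of a holomorphic function forces that function to transform with a fixed weight and a unitary character. Finally the divisor formula is (iii) made precise: collecting the contributions of the lattice vectors $dh$, $d>0$, on a fixed primitive ray gives $\operatorname{div}(F_{\varphi})=\sum_{h}\alpha_{h}\mathcal D_{h}(L_{2})$ with $\alpha_{h}=\sum_{d>0}f(d^{2}n,dl)$, the inner sum over $n,l$ with $h\equiv l\bmod L_{2}$ and $(h,h)=2n-(l,l)$.

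The hard part is steps (ii)--(iv): the regularization of the a priori divergent theta integral, the proof that its singular locus is \emph{precisely} $\bigcup_{h}\mathcal D_{h}(L_{2})$, and the exact local computation of the singularity — this is the technical core of Borcherds' \emph{Automorphic forms with singularities on Grassmannians}. For the Jacobi-form reformulation there is in addition the bookkeeping of the passage between the coefficients $f(n,l)$ and the Weil-representation coefficients $c(\gamma,m)$, so that the multiplicities $\alpha_{h}$, including the imprimitive contributions collected by the $d>0$ sum, come out in exactly the stated normalization. An alternative, more elementary route avoids the integral: one \emph{defines} $F_{\varphi}$ by the infinite product at one cusp and then verifies the $\operatorname{O}(L_{2})^{+}$-functional equation on generators — the Heisenberg and $\sl$-relations being immediate from the product, and the reflection in the second hyperbolic plane the only nontrivial check — but the singularity analysis along the $\mathcal D_{h}(L_{2})$ is unavoidable in either case; see \cite{G2}.
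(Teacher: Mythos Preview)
The paper does not actually prove this theorem: immediately after the statement it writes ``For the details of the construction of $F_{\varphi}$ we refer to \cite{G2} and \cite{Bo}.'' So there is no proof in the paper to compare against; the result is quoted as a black box from Borcherds and Gritsenko.

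Your sketch is a faithful outline of exactly what those references do: pass from the weak Jacobi form to a vector-valued form for the Weil representation via theta decomposition, apply Borcherds' regularized theta lift, and read off the weight from the constant term and the divisor from the wall-crossing of the singular theta integral. The alternative ``define the product at a cusp and verify the functional equations on generators'' that you mention at the end is closer in spirit to Gritsenko's presentation in \cite{G2}. So your proposal is correct and matches the cited literature; there is simply nothing in the present paper to compare it to beyond the citation.
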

\noindent For the details of the construction of $F_{\varphi}$ we refer to \cite{G2} and \cite{Bo}. 
The root lattices of type $D_{N}$ have been defined in the last section. Using the same coordinates as before let $\mathfrak{z}\in D_{N}\otimes \C$. In general the only symmetries of the Coxeter diagram of $D_{N}$ are given by the permutation of $z_{1}:=(\mathfrak{z},\varepsilon_{1})$ and $z_{2}:=(\mathfrak{z},\varepsilon_{2})$:
\vspace{5mm}

\begin{figure}[h]
\begin{center}
 \begin{tikzpicture}
 \node[below] at (0,-1.3) {$z_{2}$};
 \node[above] at (0,1.3) {$z_{1}$};
 \node[above] at (-0.4,-0.3) {$\sigma_{\varepsilon_{1}}$};
 \node[below] at (2,-0.3) {$z_{3}$};
 \node[below] at (8,-0.3) {$z_{N}$};
 \filldraw[black] (0,1) circle (1.6pt);
\filldraw[black] (0,-1) circle (1.6pt);
 \filldraw[black] (2,0) circle (1.6pt);
 \filldraw[black] (4,0) circle (1.6pt);
 \filldraw[black] (6,0) circle (1.6pt);
 \filldraw[black] (8,0) circle (1.6pt);
 \draw[](0,1)--(2,0);
 \draw[thick,gray,dashed,<->](0,0.9)--(0,-0.9);
 \draw[](0,-1)--(2,0);
 \draw[](2,0)--(4,0);
 \draw[dotted](4,0)--(6,0);
 \draw[](6,0)--(8,0);
\end{tikzpicture}
\end{center}
\end{figure}
\vspace{5mm}

\noindent If $N=4$ the diagram has additional symmetries:
\begin{figure}[ht]
\begin{center}
 \begin{tikzpicture}
 \node[below] at (5:2.3) {$z_{4}$};
 \node[above] at (124:2.1) {$z_{1}$};
 \node[below] at (0:0.3) {$z_{3}$};
 \node[below] at (240:2.1) {$z_{2}$};
 \filldraw[black] (0:0) circle (1.6pt);
\filldraw[black] (0:2) circle (1.6pt);
 \filldraw[black] (120:2) circle (1.6pt);
 \filldraw[black] (240:2) circle (1.6pt);
 \draw[](0:0)--(120:2);
\draw[](0:0)--(240:2);
\draw[](0:0)--(0:2);
 \draw[<->,thick,dashed,gray](4:2) arc (4:116:2);
 \draw[<->,thick,dashed,gray](124:2) arc (124:236:2);
 \draw[<->,thick, dashed,gray](244:2) arc (244:356:2);
\node[above] at (60:2) {$\sigma_{v}$};
\node[above] at (180:2.4) {$\sigma_{\varepsilon_{1}}$};
\node[below] at (300:2.2) {$\sigma_{\varepsilon_{1}}\sigma_{v}\sigma_{\varepsilon_{1}}$};
 \node[below] at (5:7.3) {$\displaystyle v=\frac{\varepsilon_{1}+\varepsilon_{2}+\varepsilon_{3}-\varepsilon_{4}}{2}\in D_{4}^{\vee}\,.$};
\end{tikzpicture}
\end{center}
\end{figure}

\noindent In this case we can define two additional singular Jacobi forms
\[\vartheta_{D_{4}}^{(2)}:=\left.\vartheta_{D_{4}}\right|_{2,1}\sigma_{\varepsilon_{1}}\sigma_{v}\sigma_{\varepsilon_{1}}\quad,\quad \vartheta_{D_{4}}^{(3)}:=\left.\vartheta_{D_{4}}\right|_{2,1}\sigma_{v}\,.\]
If we use our standard coordinates  we obtain:
\[\begin{aligned}
   \vartheta_{D_{4}}(\tau,\mathfrak{z}_{D_{4}})&=&\vartheta(\tau,z_{1}-z_{2})\vartheta(\tau,z_{1}+z_{2}-z_{3})\vartheta(\tau,z_{3}-z_{4})\vartheta(\tau,z_{4})\\
\vartheta_{D_{4}}^{(2)}(\tau,\mathfrak{z}_{D_{4}})&=&\vartheta(\tau,z_{2})\vartheta(\tau,z_{3}-z_{2})\vartheta(\tau,z_{1}-z_{3}+z_{4})\vartheta(\tau,z_{1}-z_{4})\\
\vartheta_{D_{4}}^{(3)}(\tau,\mathfrak{z}_{D_{4}})&=&\vartheta(\tau,z_{1})\vartheta(\tau,z_{1}-z_{3})\vartheta(\tau,z_{4}-z_{2})\vartheta(\tau,-z_{2}+z_{3}-z_{4})\,.
  \end{aligned}\] 
\noindent For any $0\neq l\in L_{2}$ we define $\operatorname{div}(l)\in\N$ to be the generator of the ideal $\left((l,h)\,|\,h\in L_{2}\right)_{\Z}$ and we put $l^{\ast}:=l/\operatorname{div}(l)$. A vector $l\in L_{2}$ is called primitive if
\[\forall k\in\Z,h\in L_{2}\,:\,(l=kh\implies k=\pm 1)\,.\]
The following Proposition is very useful if one likes to determine the orbits of the divisor of a multiplicative lifting under the orthogonal group. A proof can be found in \cite{GHS}.
\begin{prop}[Eichler criterion]\label{Eichler criterion}  
 If $r,s\in L_{2}$ are primitive, $(s,s)=(r,r)$ and $r^{\ast}\equiv s^{\ast}\bmod L_{2}$ then there exists a $g\in \widetilde{\operatorname{SO}}(L_{2})^{+}$ such that $g(r)=s$.
\end{prop}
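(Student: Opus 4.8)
The natural framework is that of \emph{Eichler transvections} (Siegel--Eichler transformations). Write $L_{2}=U\oplus U_{1}\oplus L(-1)$, where $U=\langle e,f\rangle$ and $U_{1}=\langle e_{1},f_{1}\rangle$ are the two hyperbolic planes coming from the splitting of $L_{2}$. For an isotropic vector $a\in L_{2}$ and a vector $b\in a^{\perp}\cap L_{2}$ put
\[
 E(a,b)(v)=v-(b,v)\,a+(a,v)\,b-\tfrac{1}{2}(b,b)(a,v)\,a\,.
\]
The plan is first to record the standard elementary facts about these maps: each $E(a,b)$ is an isometry of $L_{2}$; it fixes $D(L_{2})$ pointwise, since $a,b\in L_{2}$ forces $(a,w),(b,w)\in\Z$ for every $w\in L_{2}^{\vee}$ while $(b,b)\in 2\Z$; and $\det E(a,b)=1$ with trivial real spinor norm. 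Hence the subgroup $E(L_{2})$ they generate is contained in $\widetilde{\operatorname{SO}}(L_{2})^{+}$, and it suffices to prove that $r$ and $s$ lie in one $E(L_{2})$-orbit.

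Next I would isolate the invariants of the $E(L_{2})$-orbit of a primitive vector. For primitive $r$ the number $d:=\operatorname{div}(r)$ is also the exact order of $r^{\ast}+L_{2}$ in $D(L_{2})$; consequently any isometry fixing $D(L_{2})$ preserves both $(r,r)$ and $r^{\ast}+L_{2}$, and in particular so does every element of $E(L_{2})$. Conversely I want to show that the $E(L_{2})$-orbit of $r$ is \emph{determined} by the pair $\bigl((r,r),\,r^{\ast}+L_{2}\bigr)$, and for this I would exhibit a canonical representative. Fix once and for all a vector $\mu\in L(-1)^{\vee}\subseteq L_{2}^{\vee}$ with $\mu+L_{2}=r^{\ast}+L_{2}$, and set
\[
 v_{0}:=d\,f_{1}+\frac{(r,r)-d^{2}(\mu,\mu)}{2d}\,e_{1}+d\,\mu\ \in\ L_{2}\,.
\]
A short computation, using $r^{\ast}\equiv\mu\bmod L_{2}$ and the fact that $(r^{\ast},r^{\ast})\equiv(\mu,\mu)\bmod 2\Z$ (the discriminant quadratic form being well defined modulo $2\Z$), shows that $v_{0}\in L_{2}$, that $(v_{0},v_{0})=(r,r)$, that $\operatorname{div}(v_{0})=d$ (here the identity $(v_{0},e_{1})=d$ is decisive), and that $v_{0}^{\ast}+L_{2}=\mu+L_{2}=r^{\ast}+L_{2}$. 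Thus $v_{0}$ depends only on the invariants and on the fixed choice of $\mu$.

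The heart of the matter, and the step I expect to be the main obstacle, is to produce $g\in E(L_{2})$ with $g(r)=v_{0}$. Here I would follow Eichler's original reduction, which is precisely the argument in \cite{GHS}: using $U=\langle e,f\rangle$ as a working hyperbolic plane and keeping $U_{1}$ together with $L(-1)$ in reserve, one first applies transvections based at $e$ and $f$ with arguments in $U^{\perp}$ to arrange that $(r,e)\neq 0$; then, with arguments running through $U_{1}$, one trades the $U_{1}$-component of $r$ against its $U$-component in a Euclidean-algorithm style iteration until $(r,e)=d=\operatorname{div}(r)$; finally transvections based at $e_{1}$ and $f_{1}$ move the $L(-1)$-component of $r$, which by construction already sits in the residue class of $\mu$, onto $d\mu$, the coefficient of $e_{1}$ being forced by the value $(r,r)$, so that one arrives at $v_{0}$. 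Making the ``Euclidean algorithm on lattice vectors'' precise, and checking that the relevant residue class in $D(L_{2})$ is genuinely preserved at every stage, is the technical core of the argument.

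Granting this, the conclusion is immediate. Since $r$ and $s$ are primitive with $(r,r)=(s,s)$ and $r^{\ast}\equiv s^{\ast}\bmod L_{2}$, we may take the \emph{same} vector $\mu$ and hence the \emph{same} canonical representative $v_{0}$ for both; this yields $g_{1},g_{2}\in E(L_{2})\subseteq\widetilde{\operatorname{SO}}(L_{2})^{+}$ with $g_{1}(r)=v_{0}=g_{2}(s)$, so that $g:=g_{2}^{-1}g_{1}$ lies in $\widetilde{\operatorname{SO}}(L_{2})^{+}$ and satisfies $g(r)=s$, as required.
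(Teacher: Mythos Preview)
The paper does not give its own proof of this proposition: it simply states the result and refers the reader to \cite{GHS}. Your sketch via Eichler transvections is exactly the standard argument one finds there, and the outline is sound; the canonical representative $v_{0}$ is well defined (your check that $(r,r)-d^{2}(\mu,\mu)\in 2d\Z$ and that $v_{0}$ is primitive with $\operatorname{div}(v_{0})=d$ goes through because the order of $r^{\ast}$ in $D(L_{2})$ is precisely $d$). The only part you leave at the level of a plan is the Euclidean-algorithm reduction using transvections based at $e,f,e_{1},f_{1}$, and you correctly identify this as the technical core---but since the paper itself defers entirely to the literature, your proposal already contains more detail than the paper does.
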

\noindent The next Theorem can be found in \cite{G1}.
\begin{thm}[Gritsenko '10]\label{Lifting construction of reflective modular forms for DN tower}
  Let $N\in\{1,\dots,8\}$. There exists a modular form $G_{12-N}^{D_{N}}\in \mathcal{M}_{12-N}(\Gamma,\chi)$ where
\[\Gamma=\begin{cases}
          \operatorname{O}(L_{2}(D_{N}))^{+}&\text{ if }N\neq 4\\
	  \left<\widetilde{\operatorname{O}}(L_{2}(D_{4}))^{+},\sigma_{\varepsilon_{1}}\right>&\text{ if } N=4
         \end{cases}\,.\]
In the last case $\Gamma\leq \operatorname{O}(L_{2}(D_{4}))^{+}$ is a subgroup of index three and the character is defined as
\[\chi:\Gamma\xrightarrow[]{}\Gamma/\widetilde{\operatorname{O}}(L_{2}(D_{N}))^{+}\xrightarrow[]{\operatorname{sign}}\{\pm 1\}\,.\]
 The divisor of $G_{12-N}^{D_{N}}$ is given by 
\[\sum_{\substack{\pm r \in L_{2}(D_{N})\\(r,r)=-4\\\operatorname{div}(r)=2}}{\mathcal{D}_{r}(L_{2}(D_{N}))}=\widetilde{\operatorname{O}}(L_{2}(D_{N}))^{+}.\mathcal{D}_{\varepsilon_{1}}(L_{2}(D_{N}))\]
if $N\neq 4$ and for $D_{4}$ it is given by the orbit
\[\widetilde{\operatorname{O}}(L_{2}(D_{N}))^{+}.\mathcal{D}_{\varepsilon_{1}}(L_{2}(D_{N}))\,.\]
 If $N\neq 8$ then $G_{12-N}^{D_{N}}$ is a cusp form. 
\end{thm}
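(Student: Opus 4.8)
\noindent The plan is to realise $G_{12-N}^{D_N}$ at once as a Borcherds product -- which supplies its weight, its exact divisor and its invariance under $\widetilde{\operatorname{O}}(L_{2}(D_N))^{+}$ -- and, for $N\le 7$, as the additive (Gritsenko) lift of a Jacobi cusp form, which supplies cuspidality. The first and hardest step is to produce the input of the multiplicative lift. Using the theta type building blocks of Section \ref{sec: theta type Jacobi forms} -- the singular form $\vartheta_{D_N}$ (Proposition \ref{prop:singular Jacobi forms for DN}(a)), $\phi_{0,1}$ (Example \ref{eta function and Gritsenkos theta function}(c)), $\eta$ and $\Delta_{12}$ -- together with the pullback and tower constructions of Propositions \ref{prop:theta type construction for arbitrary root lattice} and \ref{prop:singular Jacobi forms for DN}, one writes down a weakly holomorphic Jacobi form
\[\phi_{0,D_N}\in J^{\textit{(weak)}}_{0,D_N;1}(1)\]
of weight $0$ and index $1$ for $D_N$, with integral Fourier coefficients $f(n,\ell)$ whenever $2n-(\ell,\ell)\le 0$, and whose \emph{entire} principal part is
\[f(0,0)=24-2N,\qquad f(0,\ell)=1\ \text{ for }\ell\in D_N^{\vee},\ (\ell,\ell)=1,\ \ell\equiv\varepsilon_{1}\bmod D_N,\]
all remaining coefficients with $2n-(\ell,\ell)<0$ being zero (in particular $f(0,\ell)=0$ for $\ell$ a root and for $\ell$ in the spinor cosets, so no $(-2)$-hyperplane and no extra $(-4)$-hyperplane occurs). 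Verifying that the chosen combination is genuinely holomorphic at the cusp and has exactly this principal part is the computational core; it rests on the Fourier expansion of $\phi_{0,1}$ in Example \ref{eta function and Gritsenkos theta function}(c), on the lower bound $-\tfrac12$ for the hyperbolic norm of its indices (which is raised to $0$ under multiplication by $\eta^{6}$), and on $\phi_{0,1}(\tau,0)=12$.

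Granting $\phi_{0,D_N}$, set $G_{12-N}^{D_N}:=F_{\phi_{0,D_N}}$. By Theorem \ref{theorem:borcherds lifting for weak jacob forms} this is a modular form of weight $\tfrac12 f(0,0)=12-N$ for $\widetilde{\operatorname{O}}(L_{2}(D_N))^{+}$ with divisor $\sum_{h}\alpha_{h}\mathcal{D}_{h}$, $\alpha_{h}$ as there. Feeding in the principal part, $\alpha_{h}\ne0$ forces $(h,h)=-1$ and $h\equiv\varepsilon_{1}\bmod L_{2}(D_N)$, in which case $\alpha_{h}=1$; writing $h=r/\operatorname{div}(r)$ this is the condition $(r,r)=-4$, $\operatorname{div}(r)=2$, $r/2\equiv\varepsilon_{1}$, so by the Eichler criterion (Proposition \ref{Eichler criterion}) all such $r$ form one $\widetilde{\operatorname{SO}}(L_{2}(D_N))^{+}$-orbit and
\[\operatorname{div}(G_{12-N}^{D_N})=\widetilde{\operatorname{O}}(L_{2}(D_N))^{+}.\,\mathcal{D}_{\varepsilon_{1}}(L_{2}(D_N)).\]
For $N\ne4$ the vector coset is the unique nonzero class of $D(D_N)$ on which the discriminant form is $1$, so $r/2\equiv\varepsilon_{1}$ is automatic and the divisor equals $\sum_{\pm r:\,(r,r)=-4,\,\operatorname{div}(r)=2}\mathcal{D}_{r}(L_{2}(D_N))$; for $D_4$, by contrast, triality renders all three nonzero classes of $D(D_4)$ of value $1$ -- concretely $\operatorname{O}(L_{2}(D_4))^{+}$ moves $\mathcal{D}_{\varepsilon_{1}}$ to the mirrors of $\vartheta_{D_4}^{(2)}$ and $\vartheta_{D_4}^{(3)}$, obtained from $\vartheta_{D_4}$ by $\sigma_{v}$ and $\sigma_{\varepsilon_{1}}\sigma_{v}\sigma_{\varepsilon_{1}}$ -- so there the divisor is genuinely only the single orbit above.

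Next I would enlarge the group of modularity. For $N\ne4$ the divisor is the set of \emph{all} $(-4)$-divisors of divisor $2$, hence $\operatorname{O}(L_{2}(D_N))^{+}$-invariant; therefore for $g\in\operatorname{O}(L_{2}(D_N))^{+}$ the quotient $g^{\ast}G_{12-N}^{D_N}/G_{12-N}^{D_N}$ is a nowhere-vanishing holomorphic automorphy factor, so $G_{12-N}^{D_N}$ is modular for the full $\operatorname{O}(L_{2}(D_N))^{+}$ up to a finite-order character (as $\operatorname{O}(L_{2}(D_N))^{+}/\widetilde{\operatorname{O}}(L_{2}(D_N))^{+}$ is finite), and inspecting the $\operatorname{O}(L_{2}(D_N))^{+}$-invariant leading (Weyl-vector) term of the product shows this character is trivial. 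For $N=4$, $\operatorname{div}(G_{8}^{D_4})=\widetilde{\operatorname{O}}(L_{2}(D_4))^{+}.\mathcal{D}_{\varepsilon_{1}}$ is one of the three orbits permuted by triality, hence stabilised exactly by $\Gamma=\langle\widetilde{\operatorname{O}}(L_{2}(D_4))^{+},\sigma_{\varepsilon_{1}}\rangle$; this $\Gamma$ has index $3$ because $\sigma_{\varepsilon_{1}}$ fixes $\varepsilon_{1}$ and transposes the two spinor classes, so its image in $\operatorname{O}(L_{2}(D_4))^{+}/\widetilde{\operatorname{O}}(L_{2}(D_4))^{+}\cong S_{3}$ has order $2$. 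As before $G_{8}^{D_4}$ is modular for $\Gamma$ up to a character $\chi$ trivial on $\widetilde{\operatorname{O}}(L_{2}(D_4))^{+}$; and since $G_{8}^{D_4}$ vanishes to the odd order $1$ along $\mathcal{D}_{\varepsilon_{1}}$, which is the fixed locus of the reflection $\sigma_{\varepsilon_{1}}\in\Gamma$, it is anti-invariant under $\sigma_{\varepsilon_{1}}$; this pins $\chi$ down to the composite $\Gamma\to\Gamma/\widetilde{\operatorname{O}}(L_{2}(D_4))^{+}\xrightarrow{\operatorname{sign}}\{\pm1\}$.

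Finally, for cuspidality when $N\le 7$ I would identify $G_{12-N}^{D_N}$ with an additive lift: comparing leading Fourier--Jacobi coefficients shows $F_{\phi_{0,D_N}}=c\cdot\operatorname{A-Lift}(\psi_{12-N,D_N})$ for some $c\in\C^{\times}$, with $\psi_{12-N,D_N}=\eta^{24-3N}\vartheta_{D_N}\in J_{12-N,D_N;1}$ as in Proposition \ref{prop:singular Jacobi forms for DN}(a) (this coincidence being part of \cite{G1}). For $N\le 7$ the discriminant form of $D_N$ has no nonzero isotropic class, so $L_{2}(D_N)=2U\oplus D_N(-1)$ is a maximal even lattice; since $\psi_{12-N,D_N}$ is then a Jacobi cusp form (Proposition \ref{prop:singular Jacobi forms for DN}(a)) of weight $12-N\ge 4$, Gritsenko's lift theorem quoted above sends it to a cusp form, whence $G_{12-N}^{D_N}$ is a cusp form. (For $N=8$ the weight $12-N=4$ equals the singular weight $N/2$ of the signature $(2,N+2)$, so the nonzero Borcherds product $G_{4}^{D_8}$ is a singular -- hence non-cuspidal -- modular form, which is exactly why $N=8$ is excluded.) The main obstacle throughout is the first step: producing $\phi_{0,D_N}$ out of the theta type forms with precisely the prescribed principal part; once it is in place, the weight, divisor, group and cusp statements follow by assembling Theorem \ref{theorem:borcherds lifting for weak jacob forms}, the Eichler criterion, and the additive lift.
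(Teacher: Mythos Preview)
Your overall architecture --- construct a weak Jacobi form $\phi_{0,D_N}$ of weight $0$ and index $1$, feed it into Theorem~\ref{theorem:borcherds lifting for weak jacob forms} to read off weight and divisor, use the Eichler criterion to identify the orbit, and then match the Borcherds product with $\operatorname{A\text{-}Lift}(\psi_{12-N,D_N})$ to get cuspidality --- is exactly the strategy the paper has in mind. The paper itself does not spell out a proof here (it refers to \cite{G1}), but in the proof of the subsequent theorem the $N=4$ case is carried out in detail and illustrates the general mechanism.

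The one substantive point on which your proposal differs from the paper is the construction of $\phi_{0,D_N}$, which you yourself flag as ``the main obstacle'' and leave unspecified. The paper (again visible in the proof of the next theorem, following \cite{G2}) does \emph{not} assemble $\phi_{0,D_N}$ from the tower constructions of Section~\ref{sec: theta type Jacobi forms}; rather it uses the Hecke operator $\operatorname{T}_{-}(2)$ and the Fourier--Jacobi criterion for automorphic products to write
\[
\varphi_{0,D_N}\;=\;-\,\frac{2^{-1}\,\psi_{12-N,D_N}\big|\operatorname{T}_{-}(2)}{\psi_{12-N,D_N}}\,,
\]
which is automatically a weak Jacobi form of weight $0$ and index $1$ (the quotient of two Jacobi forms of equal weight and indices $2$ and $1$), and whose $q^{0}$-term is then computed directly. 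This bypasses the combinatorial search you envisage and makes the identification $F_{\varphi_{0,D_N}}=\operatorname{A\text{-}Lift}(\psi_{12-N,D_N})$ essentially tautological via the Fourier--Jacobi criterion, rather than a separate comparison of leading coefficients.

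There is also a genuine slip in your character argument for $N\neq 4$. You claim that inspecting the leading term shows $\chi$ is trivial on $\operatorname{O}(L_{2}(D_N))^{+}$. But the very argument you give for $N=4$ applies uniformly: $G_{12-N}^{D_N}$ vanishes to the odd order $1$ along $\mathcal{D}_{\varepsilon_{1}}$, which is the mirror of the reflection $\sigma_{\varepsilon_{1}}\in\operatorname{O}(L_{2}(D_N))^{+}$, so $G_{12-N}^{D_N}\big|\sigma_{\varepsilon_{1}}=-G_{12-N}^{D_N}$ for every $N$. Hence $\chi(\sigma_{\varepsilon_{1}})=-1$ and the character is the nontrivial sign character in all cases, as the theorem asserts.
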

\noindent The proof of this Theorem uses Theorem  \ref{theorem:borcherds lifting for weak jacob forms}. The case $N=4$ plays a particular role in this tower because the maximal modular group and the divisor of $G_{8}^{D_{4}}$ are smaller. In the next Theorem we use theta type Jacobi forms to construct a modular form of weight 24 which can be considered as the correct replacement for $G_{8}^{D_{4}}$. 
\begin{thm}
There is a cusp form $\Delta_{24}^{D_{4}}\in \mathcal{S}_{24}(\operatorname{O}(L_{2}(D_{4}))^{+},\chi)$ 
with a binary character
\[\chi:\Gamma\xrightarrow[]{}\Gamma/\widetilde{\operatorname{O}}(L_{2}(D_{4}))^{+}\xrightarrow[]{\operatorname{sign}}\{\pm 1\}\quad\text{where}\quad \Gamma=\operatorname{O}(L_{2}(D_{4}))^{+}\,.\]
 Its divisor is equal to 
\[\sum_{\substack{\pm r \in L_{2}(D_{4})\\(r,r)=-4\\\operatorname{div}(r)=2}}{\mathcal{D}_{r}(L_{2}(D_{4}))}\,.\]
\end{thm}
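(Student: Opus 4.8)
\noindent The plan is to realise $\Delta_{24}^{D_{4}}$ as a Borcherds product, namely as the \emph{triality norm} of Gritsenko's reflective form $G_{8}^{D_{4}}$ from Theorem~\ref{Lifting construction of reflective modular forms for DN tower}. By Proposition~\ref{prop:singular Jacobi forms for DN}(a) together with Proposition~\ref{prop:theta type construction for arbitrary root lattice}(a) and Example~\ref{eta function and Gritsenkos theta function}(d), the function $\vartheta_{D_{4}}$ is a holomorphic Jacobi form of weight $2$, index $1$ and trivial character, so $\varphi_{1}:=\vartheta_{D_{4}}^{2}/\Delta_{12}$ is a weakly holomorphic Jacobi form with trivial character; Jacobi's triple identity gives $f_{1}(0,0)=16$, since the coefficient of the leading $q$-power of $\vartheta_{D_{4}}^{2}$ equals $\prod_{j}(r_{j}-2+r_{j}^{-1})$ and its constant term is $(-2)^{4}$, the four occurring linear forms being independent. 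By Theorem~\ref{theorem:borcherds lifting for weak jacob forms} the lift $F_{\varphi_{1}}$ is a holomorphic modular form of weight $8$ for $\widetilde{\operatorname{O}}(L_{2}(D_{4}))^{+}$, and reading its divisor off the finitely many polar Fourier coefficients of $\varphi_{1}$ identifies it, up to a nonzero constant, with $G_{8}^{D_{4}}$. The reflections $\sigma_{v}$ and $\sigma_{\varepsilon_{1}}\sigma_{v}\sigma_{\varepsilon_{1}}$ are attached to negative norm vectors of $D_{4}(-1)\subseteq L_{2}(D_{4})$ that preserve the lattice, hence lie in $\operatorname{O}(L_{2}(D_{4}))^{+}$, and by construction they turn $\vartheta_{D_{4}}$ into $\vartheta_{D_{4}}^{(3)}$ and $\vartheta_{D_{4}}^{(2)}$; so, writing $F(g\,\cdot)$ for the modular form $\mathcal{Z}\mapsto F(g\mathcal{Z})$, the two conjugates $G_{8}^{D_{4}}(\sigma_{v}\,\cdot)$ and $G_{8}^{D_{4}}(\sigma_{\varepsilon_{1}}\sigma_{v}\sigma_{\varepsilon_{1}}\,\cdot)$ are again theta type Borcherds products, of $(\vartheta_{D_{4}}^{(3)})^{2}/\Delta_{12}$ and $(\vartheta_{D_{4}}^{(2)})^{2}/\Delta_{12}$. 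I would then put
\[
\Delta_{24}^{D_{4}}:=G_{8}^{D_{4}}\cdot G_{8}^{D_{4}}(\sigma_{v}\,\cdot)\cdot G_{8}^{D_{4}}(\sigma_{\varepsilon_{1}}\sigma_{v}\sigma_{\varepsilon_{1}}\,\cdot),
\]
equivalently $\Delta_{24}^{D_{4}}=F_{\varphi}$ for $\varphi=\Delta_{12}^{-1}\bigl(\vartheta_{D_{4}}^{2}+(\vartheta_{D_{4}}^{(2)})^{2}+(\vartheta_{D_{4}}^{(3)})^{2}\bigr)$, a holomorphic function of weight $24$ manifestly invariant under the $S_{3}$ permuting the three theta type forms.

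Next I would compute the divisor. Since $\operatorname{div}\bigl(G_{8}^{D_{4}}(g\,\cdot)\bigr)=g^{-1}\operatorname{div}(G_{8}^{D_{4}})=\widetilde{\operatorname{O}}(L_{2}(D_{4}))^{+}.\mathcal{D}_{g^{-1}\varepsilon_{1}}(L_{2}(D_{4}))$ for each of the three elements $g$, it suffices to understand the orbits. By the Eichler criterion (Proposition~\ref{Eichler criterion}) the $\widetilde{\operatorname{SO}}(L_{2}(D_{4}))^{+}$-orbits of primitive $r\in L_{2}(D_{4})$ with $(r,r)=-4$ and $\operatorname{div}(r)=2$ correspond bijectively to the three nonzero classes of $D(D_{4})\cong(\Z/2)^{2}$ via $r\mapsto r^{\ast}\bmod L_{2}(D_{4})$, and by triality $\sigma_{\varepsilon_{1}}$ and $\sigma_{v}$ act on these three classes as the full group $\operatorname{O}(D(D_{4}))\cong S_{3}$. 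Hence the three divisors above are pairwise distinct and exhaust this family, so
\[
\operatorname{div}(\Delta_{24}^{D_{4}})=\sum_{\substack{\pm r\in L_{2}(D_{4})\\(r,r)=-4,\ \operatorname{div}(r)=2}}\mathcal{D}_{r}(L_{2}(D_{4})),
\]
each component with multiplicity one; in particular this divisor is invariant under all of $\operatorname{O}(L_{2}(D_{4}))^{+}$.

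For the modularity, recall that $L_{2}(D_{4})$ splits two hyperbolic planes, so the map $\operatorname{O}(L_{2}(D_{4}))^{+}\to\operatorname{O}(D(D_{4}))$ is surjective with kernel $\widetilde{\operatorname{O}}(L_{2}(D_{4}))^{+}$, and as its image is generated by the classes of $\sigma_{\varepsilon_{1}}$ and $\sigma_{v}$, the whole group $\operatorname{O}(L_{2}(D_{4}))^{+}$ is generated by $\widetilde{\operatorname{O}}(L_{2}(D_{4}))^{+}$, $\sigma_{\varepsilon_{1}}$ and $\sigma_{v}$. Each factor of $\Delta_{24}^{D_{4}}$ is modular with trivial character for $\widetilde{\operatorname{O}}(L_{2}(D_{4}))^{+}$, hence so is $\Delta_{24}^{D_{4}}$. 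For any $g\in\operatorname{O}(L_{2}(D_{4}))^{+}$ the quotient $\Delta_{24}^{D_{4}}(g\,\cdot)/\Delta_{24}^{D_{4}}$ is a nowhere-vanishing holomorphic function of weight $0$ on $\mathcal{D}^{\bullet}$ (the divisor being $g$-invariant), hence a constant $\psi(g)$; thus $\psi$ is a character of $\operatorname{O}(L_{2}(D_{4}))^{+}$ trivial on $\widetilde{\operatorname{O}}(L_{2}(D_{4}))^{+}$, and so it factors through $S_{3}$ and equals either the trivial character or $\chi$. To exclude the trivial case I would evaluate at $g=\sigma_{\varepsilon_{1}}$: this reflection fixes the vector class, hence $\operatorname{div}(G_{8}^{D_{4}})$, and acts on $G_{8}^{D_{4}}$ by $-1$ according to Theorem~\ref{Lifting construction of reflective modular forms for DN tower}, while it interchanges the two spinor classes and hence permutes the remaining two factors by scalars $a,b$ with $ab=1$ (forced by $\sigma_{\varepsilon_{1}}^{2}=\mathrm{id}$); therefore $\Delta_{24}^{D_{4}}(\sigma_{\varepsilon_{1}}\,\cdot)=-ab\,\Delta_{24}^{D_{4}}=-\Delta_{24}^{D_{4}}$, so $\psi(\sigma_{\varepsilon_{1}})=-1$ and $\psi=\chi$. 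Finally $G_{8}^{D_{4}}$ is a cusp form by Theorem~\ref{Lifting construction of reflective modular forms for DN tower} ($N=4\neq 8$), its conjugates are cusp forms because $\operatorname{O}(L_{2}(D_{4}))^{+}$ permutes the cusps of $\operatorname{O}(L_{2}(D_{4}))^{+}\backslash\mathcal{D}^{\ast}$, and a product of a cusp form with holomorphic modular forms is a cusp form; hence $\Delta_{24}^{D_{4}}\in\mathcal{S}_{24}(\operatorname{O}(L_{2}(D_{4}))^{+},\chi)$.

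The main obstacle is the divisor bookkeeping. One must run the explicit Fourier analysis of $\vartheta_{D_{4}}$ by Jacobi's triple identity (as in the proof of Proposition~\ref{prop:theta type construction for arbitrary root lattice}) to confirm both $f_{1}(0,0)=16$ and that the only polar Fourier coefficients of $\varphi_{1}$ feed exactly the reflective divisor with $\operatorname{div}(r)=2$ and multiplicity one, with no contribution of type $(r,r)=-2$ or $\operatorname{div}(r)=1$; and one must make the triality step precise, i.e.\ that $\sigma_{v}$ sends the vector class of $D(D_{4})$ to a spinor class and that $\sigma_{\varepsilon_{1}},\sigma_{v}$ together realise all of $\operatorname{O}(D(D_{4}))\cong S_{3}$. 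These are finite, explicit computations, but they form the heart of the argument.
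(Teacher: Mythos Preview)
Your overall architecture matches the paper's: realise $\Delta_{24}^{D_{4}}$ as the product of $G_{8}^{D_{4}}$ with its two triality conjugates, read off the divisor via Eichler's criterion together with the $S_{3}$-action on $D(D_{4})$, and then pin down the character. The paper determines the character at the Jacobi level, checking directly that the first Fourier--Jacobi coefficient $\psi=\eta^{36}\vartheta_{D_{4}}\vartheta_{D_{4}}^{(2)}\vartheta_{D_{4}}^{(3)}$ satisfies $\psi|_{24,3}\sigma_{\varepsilon_{1}}=-\psi$ and $\psi|_{24,3}\sigma_{v}=-\psi$; your Koecher-type argument (the quotient is a weight-$0$ form with empty divisor, hence a scalar, and the scalar at $\sigma_{\varepsilon_{1}}$ is $-1$) is a perfectly valid alternative.

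There is, however, a genuine error in your identification of the Borcherds input. Your $\varphi_{1}=\vartheta_{D_{4}}^{2}/\Delta_{12}$ has weight $2\cdot 2-12=-8$ and index $2$, not weight $0$ and index $1$ as Theorem~\ref{theorem:borcherds lifting for weak jacob forms} requires. Worse, its $q^{0}$-part $\prod_{j}(r_{j}-2+r_{j}^{-1})$ has coefficient $1\cdot(-2)^{3}=-8$ at $r_{1}$, so $f_{1}(0,\varepsilon_{1})=-8$; a Borcherds product built from $\varphi_{1}$ would then have a \emph{pole} of order $8$ along $\mathcal{D}_{\varepsilon_{1}}$, not a simple zero, and could never equal the holomorphic form $G_{8}^{D_{4}}$. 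The ``explicit Fourier analysis'' you postpone to the last paragraph would reveal this immediately. The paper instead obtains the correct weight-$0$, index-$1$ input via the Fourier--Jacobi criterion for automorphic products,
\[
\varphi_{0,D_{4}}=-\,\frac{2^{-1}\,\psi_{8,D_{4}}|T_{-}(2)}{\psi_{8,D_{4}}}\,,\qquad \psi_{8,D_{4}}=\eta^{12}\vartheta_{D_{4}}\,,
\]
and computes its $q^{0}$-part to be $16+\sum_{j}(s_{j}+s_{j}^{-1})$, which indeed gives multiplicity one along the $\widetilde{\operatorname{O}}(L_{2}(D_{4}))^{+}$-orbit of $\varepsilon_{1}$ and nothing else. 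Since you already invoke Theorem~\ref{Lifting construction of reflective modular forms for DN tower} for the divisor, the modular group and the cuspidality of $G_{8}^{D_{4}}$, your argument survives once you discard the $\varphi_{1}$-computation and the claim ``equivalently $\Delta_{24}^{D_{4}}=F_{\varphi}$ for $\varphi=\Delta_{12}^{-1}(\vartheta_{D_{4}}^{2}+(\vartheta_{D_{4}}^{(2)})^{2}+(\vartheta_{D_{4}}^{(3)})^{2})$''; but as written, the assertion that $F_{\varphi_{1}}$ is holomorphic and coincides with $G_{8}^{D_{4}}$ is false.
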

\noindent This function appeared in \cite{FH} first and later in \cite{K2} where the author determined the graded ring of modular forms for the Hurwitz order.
\begin{proof}
 For the proof we will first verify that 
\[\operatorname{A-Lift}(\eta(\tau)^{12}\vartheta_{D_{4}}),\operatorname{A-Lift}(\eta(\tau)^{12}\vartheta_{D_{4}}^{(2)}),\operatorname{A-Lift}(\eta(\tau)^{12}\vartheta_{D_{4}}^{(3)})\] are multiplicative liftings as in Theorem \ref{theorem:borcherds lifting for weak jacob forms}. The discriminant group of $D_{4}$ has the four classes
\[D_{4},\varepsilon_{1}+D_{4},v+D_{4},v+\varepsilon_{1}+D_{4}\]
where $v$ was defined in the diagram above. The group $\operatorname{O}(L_{2}(D_{4}))^{+}$ is generated by $\widetilde{\operatorname{O}}(L_{2}(D_{4}))^{+}$ and the natural embeddings of $\sigma_{\varepsilon_{1}}$ and $\sigma_{v}$ into $\operatorname{O}(L_{2}(D_{4}))^{+}$. 
We start by investigating $\psi_{8,D_{4}}(\tau,\mathfrak{z})=\eta(\tau)^{12}\vartheta_{D_{4}}(\tau,\mathfrak{z})$. From the Fourier-Jacobi criterion for automorphic products, compare \cite{G2}, Corollary 3.3 we deduce that 
\[ \varphi_{0,D_{4}}=-\,\frac{2^{-1}\psi_{8,D_{4}}|{\operatorname{T}_{-}(2)}}{\psi_{8,D_{4}}}\]
where $\operatorname{T}_{-}(2)$ is the Hecke operator defined in \cite{G}, if $\operatorname{A-Lift}(\psi_{8,D_{4}})=F_{\varphi_{0,D_{4}}}$. A direct computation yields
\[\varphi_{0,D_{4}}(\tau,\mathfrak{z})=16+s_{1}+s_{2}+s_{3}+s_{4}+s_{1}^{-1}+s_{2}^{-1}+s_{3}^{-1}+s_{4}^{-1}+q(\dots)\]
where
\[s_{j}=\exp(2\pi i(\mathfrak{z},\varepsilon_{j}))\,.\]
Hence $\varphi_{0,D_{4}}$ is a weak Jacobi form which satisfies the preliminaries of Theorem \ref{theorem:borcherds lifting for weak jacob forms}. If $L=D_{N}$ the hyperbolic norm in the Fourier exansion of a weak Jacobi form is bounded from below by the quantity
\[-\max_{l\in L^{\vee}/L}\left(\min_{v\in l+L}{(v,v)}\right)=-\frac{N}{4}\,.\] 
Now assume there is a pair $(n,l)\in\N\times L^{\vee}$ such that $2n-(l,l)<0$. Then one observes
\[2n-(l,l)\in \{-N/4,-1\}\,.\]
Since the lattice $D_{N}^{\vee}$ represents all such values under $-(\cdot,\cdot)_{D_{N}}$ we know that  all the information about the divisor is completely contained in the $q^{0}$ part of $\varphi_{0,D_{4}}$ since the Fourier coefficients of a Jacobi form depend only on the hyperbolic norm and the class of $l$. Now Theorem \ref{theorem:borcherds lifting for weak jacob forms} and Proposition \ref{Eichler criterion} yield that the divisor of $F_{\varphi_{0,D_{4}}}$ is exactly
\[\widetilde{\operatorname{O}}(L_{2}(D_{4}))^{+}.\mathcal{D}_{\varepsilon_{1}}(L_{2}(D_{4}))\,.\]
By constrcution this divisor is contained in the divisor of $\operatorname{A-Lift}(\eta(\tau)^{12}\vartheta_{D_{4}})$. By Koecher's principle we can divide the last function by $F_{\varphi_{0,D_{4}}}$ and conclude that they coincide up to a multiple in $\C^{\times}$ and a second application of the Fourier-Jacobi criterion for automorphic products yields that this constant is actually one in accordance with Theorem \ref{Lifting construction of reflective modular forms for DN tower}. So far we have just repeated the proof of Theorem \ref{Lifting construction of reflective modular forms for DN tower} for the case $N=4$. Now the same construction can be applied to  $\operatorname{A-Lift}(\eta(\tau)^{12}\vartheta_{D_{4}}^{(2)}),\operatorname{A-Lift}(\eta(\tau)^{12}\vartheta_{D_{4}}^{(3)})$ which yields again modular forms of weight $8$ where the divisor is determined by the orbit of $v$ or $\varepsilon_{1}+v$, respectively. We consider the product 
\[\psi(\tau,\mathfrak{z}):=\eta(\tau)^{36}\vartheta_{D_{4}}(\tau,\mathfrak{z})\vartheta_{D_{4}}^{(2)}(\tau,\mathfrak{z})\vartheta_{D_{4}}^{(3)}(\tau,\mathfrak{z})\,.\]
Then $\psi$ has the representation 
\[\psi(\tau,\mathfrak{z})=\eta(\tau)^{36}\cdot\prod_{j=1}^{4}{\vartheta(\tau,(\mathfrak{z},\varepsilon_{j}))\vartheta(\tau,(\mathfrak{z},\sigma_{v}(\varepsilon_{j})))\vartheta(\tau,(\mathfrak{z},\sigma_{\varepsilon_{1}}\sigma_{v}\sigma_{\varepsilon_{1}}(\varepsilon_{j})))}\,.\]
From this representation and the above representation in coordinates we deduce
\[\psi|_{24,3}\sigma_{\varepsilon_{1}}=-\psi\quad,\quad \psi|_{24,3}\sigma_{v}=-\psi\,.\]
Since these two elements generate the group of the graph automorphisms of the Coxeter diagram of $D_{4}$ which is isomorphic to the symmetric group in three letters $\mathcal{S}_{3}$ we see that $\psi$ is $\mathcal{S}_{3}$-modular with respect to the $\operatorname{sign}$-character. If we define $\Delta_{24}^{D_{4}}$ as the product  
\[\operatorname{A-Lift}(\eta(\tau)^{12}\vartheta_{D_{4}})\operatorname{A-Lift}(\eta(\tau)^{12}\vartheta_{D_{4}}^{(2)})\operatorname{A-Lift}(\eta(\tau)^{12}\vartheta_{D_{4}}^{(3)})\]
the definition of $\operatorname{A-Lift}$ yields a cusp form whose divisor is the composition of the divisors of the factors which is $\operatorname{O}(L_{2}(D_{4}))^{+}$-modular with respect to a binary character induced by the sign-character.
\end{proof}
\section*{Acknowledgements}
\noindent I would like to thank Valery Gritsenko for many valuable discussions and for his help. 
%
%
%
%

\bibliography{/home/woitalla/Dissertation/Bibliography/bibliography}

\begin{thebibliography}{10}

\bibitem{Ap}
T.M. Apostol.
\newblock {\em {Modular Functions and Dirichlet Series in Number Theory}},
  volume~21.
\newblock Springer-Verlag New York, Berlin, Heidelberg, 1976.

\bibitem{Bo}
R.E. Borcherds.
\newblock {Automorphic forms with singularities on Grassmannians}.
\newblock {\em Invent. Math.}, 123:491--562, 1998.

\bibitem{BorJ}
A.~Borel and L.~Ji.
\newblock {\em {Compactifications of symmetric and locally symmetric spaces}}.
\newblock Mathematics: Theory \& Applications. Birkhäuser, Boston, 2006.

\bibitem{Br}
J.~H. Bruinier.
\newblock {\em {Borcherds Products on $\operatorname{O}(2,l)$ and Chern classes
  of Heegner Divisors}}, volume~1.
\newblock Springer-Verlag, Berlin, Heidelberg, 2002.

\bibitem{FB}
R.~Busam and E.~Freitag.
\newblock {\em {Funktionentheorie}}, volume~1.
\newblock Springer-Verlag, Berlin, Heidelberg, 1993.

\bibitem{CG}
F.~Clery and V.~Gritsenko.
\newblock {Modular forms of orthogonal type and Jacobi Theta-series}.
\newblock {\em arXiv:1106.4733 [math.AG]}.

\bibitem{EZ}
M.~Eichler and D.~Zagier.
\newblock {\em {The Theory of Jacobi Forms}}.
\newblock Birkhäuser, Boston, Basel, Stuttgart, 1985.

\bibitem{FH}
E.~Freitag and C.F. Hermann.
\newblock {Some modular varieties of low dimension}.
\newblock {\em Advances in Mathematics}, 152:203--287, 2000.

\bibitem{G2}
V.~Gritsenko.
\newblock {24 Faces of the Borcherds Modular Form $\Phi_{12}$}.
\newblock {\em ArXiv: 1203.6503 [math.AG]}.
\newblock 14 pp.

\bibitem{G1}
V.~Gritsenko.
\newblock {Reflective modular forms in Algebraic geometry}.
\newblock {\em ArXiv: 1005.3753 [math.AG]}.
\newblock 28 pp.

\bibitem{G}
V.~Gritsenko.
\newblock {Modular forms and moduli spaces of abelian and $K3$ surfaces}.
\newblock {\em Mathematica Gottingensis}, 26, 1993.

\bibitem{GHS}
V.~Gritsenko, K.~Hulek, and G.K. Sankaran.
\newblock {Abelianisation of orthogonal groups and the fundamental group of
  modular varieties}.
\newblock {\em ArXiv: 0810.1614 [math.AG]}.
\newblock 21 pp.

\bibitem{GN}
V.~Gritsenko and V.~Nikulin.
\newblock {Automorphic forms and Lorentzian Kac-Moody algebras. II}.
\newblock {\em International J. Math}, 9(1):201--275, 1998.

\bibitem{Kp}
A.W. Knapp.
\newblock {\em {Lie Groups Beyond an Introduction}}, volume 140 of {\em
  Progress in Mathematics}.
\newblock Birkhäuser, Boston, Basel, Berlin, 1996.

\bibitem{KK}
M.~Koecher and A.~Krieg.
\newblock {\em {Elliptische Funktionen und Modulformen}}, volume~1.
\newblock Springer-Verlag New York, Berlin, Heidelberg, 1997.
\newblock 2nd edition.

\bibitem{K2}
A.~Krieg.
\newblock {The graded ring of quaternionic modular forms of degree 2}.
\newblock {\em Math Z.}, 251:929--942, 2005.

\bibitem{R}
I.~Reiner.
\newblock {Real Linear Characters of the Symplectic Modular Group}.
\newblock {\em Proceedings of the American Mathematical Society},
  6(6):987--990, 1955.

\end{thebibliography}
\bibliographystyle{plain}

%
%
%
%
%
%
%
%
%
%
%
%
%
%

\end{document}